\documentclass[12pt]{amsart}
\usepackage[T1]{fontenc}
\usepackage[utf8]{inputenc}
\usepackage{lmodern}
\usepackage[margin=1.02in]{geometry}
\usepackage{amsmath,amssymb,amsthm,mathtools}
\usepackage{enumitem}
\usepackage{graphicx}
\usepackage[colorlinks=true,linkcolor=blue,citecolor=blue,urlcolor=blue]{hyperref}
\usepackage{xcolor}
\hypersetup{
    colorlinks,%
    citecolor=black,%
    filecolor=black,%
    linkcolor=black,%
    urlcolor=black
}
\usepackage{tikz}
\usepackage{float}
\usetikzlibrary{arrows.meta,calc}

\numberwithin{equation}{section}
\numberwithin{figure}{section}

\newtheorem{theorem}{Theorem}[section]

\newtheorem{proposition}[theorem]{Proposition}
\newtheorem{lemma}[theorem]{Lemma}
\newtheorem{remark}[theorem]{Remark}

\newcommand{\sech}{\operatorname{sech}}

\title[Non-radial solutions of  the critical H\'enon equation]{Existence of non-radial entire solutions for the H\'enon equation beyond even exponents}

\author{Qinfeng Jiang}
\address{School of Mathematical Sciences,
Beijing Normal University\\
Beijing 100875, China}
\email{202531130031@mail.bnu.edu.cn}

\author{Jingang Xiong}
\address{School of Mathematical Sciences, Laboratory of Mathematics and Complex Systems, MOE\\
Beijing Normal University, Beijing 100875, China\\
Center for Basic Mathematics, Institute for Advanced Study\\
Beijing Normal University, Beijing 100875, China}
\email{jx@bnu.edu.cn}

 \thanks{J. Xiong was partially supported by NSFC grant 12325104.}

\subjclass{35J91, 35A01, 35B32}
\keywords{H\'enon equation, Bifurcation theory, Non-radial positive solutions.}

\begin{document}

\begin{abstract}
This paper is concerned with the existence of non-radial positive
classical solutions for the critical H\'enon equation
\[
-\Delta u=|x|^\alpha u^{\frac{N+2+2\alpha}{N-2}}
\qquad \text{in }\mathbb R^N,
\]
where \(\alpha>0\) and \(N\ge3\), satisfying the Newtonian-type decay condition at infinity.

Gladiali, Grossi and Neves (2013) proved existence for the discrete sequence $\alpha_k=2(k-1)$, $k\in\mathbb N$, and conjectured that non-radial solutions may exist only at these special values. We disprove this conjecture by establishing existence for a continuum of exponents near each \(\alpha_k\): for every even $k>\frac{N-2}{2}$, non-radial solutions persist for parameters \(\alpha\) close to, and different from, \(\alpha_k\).

We recast the problem as a semilinear elliptic equation with Sobolev-supercritical exponent on the cylinder via the Emden--Fowler change of variables. Our argument is formulated directly on the cylindrical domain, thereby streamlining the characterization of the kernel of the linearized operator via P\"oschl--Teller spectral theory, avoiding the ball-exhaustion technique employed in the original work, and allowing us to compute the bifurcation slope and verify the non-verticality condition.

\end{abstract}
\maketitle

\section{Introduction}

The H\'enon equation
\begin{equation}\label{eq:Henon}
-\Delta u=|x|^\alpha u^p, \quad u\ge 0,
\end{equation}
with \(x\in\mathbb R^N\), \(N\ge 3\), and \(p>1\), was introduced by H\'enon
in the study of stellar dynamics as a radially weighted variant of the
Lane--Emden equation.  In this model, the unknown \(u\) represents a density or
potential-type quantity, while the factor \(|x|^\alpha\) describes a radial
inhomogeneity of the medium or of the source term.  The parameter
\(\alpha>0\) measures the strength of this radial weight: the nonlinearity is
weaker near the origin and stronger away from it.  Although the original
physical motivation is three-dimensional, the equation has been extensively
studied for general \(N\ge3\), where the weight \(|x|^\alpha\) leads to rich
variational, symmetry-breaking, and bifurcation phenomena.

In this paper, we consider the critical H\'enon equation, namely
\begin{equation}\label{eq:paramter}
\alpha \ge 0, \qquad p=p_\alpha:=\frac{N+2+2\alpha}{N-2}.
\end{equation}
This critical equation is invariant under the scaling
\[
u(x) \mapsto \lambda^{\frac{N-2}{2}} u(\lambda x), \qquad \lambda>0,
\]
and admits an explicit positive radial solution
\begin{equation}\label{eq:Ualpha}
U_\alpha(x)=c(N,\alpha) \frac{1}{(1+|x|^{2+\alpha})^{\frac{N-2}{2+\alpha}}},
\end{equation}
where
\[
c(N,\alpha)=\big[(N+\alpha)(N-2)\big]^{\frac{1}{p_\alpha-1}}.
\]
When $\alpha=0$, the classical result of Caffarelli, Gidas and Spruck \cite{CGS} establishes that, up to translation and scaling, $U_\alpha$ is the unique positive solution. For $\alpha>0$, however, the presence of the weight $|x|^\alpha$ breaks the translation invariance of the equation. Moreover, from the perspective of non-radial perturbations, the equation becomes Sobolev supercritical.

In 2013, Gladiali, Grossi and Neves \cite{GGN} proved the existence of non-radial positive solutions to the critical H\'enon equation \eqref{eq:Henon} satisfying the $|x|^{2-N}$ decay condition at infinity, for the discrete sequence of exponents
\begin{equation}\label{eq:alphak}
\alpha_k=2(k-1),\quad k\in\mathbb N.
\end{equation}
They further conjectured that non-radial solutions may exist only at these special parameter values. Our main theorem disproves this conjecture:

\begin{theorem}\label{thm:main}
Let $N\ge 3$ and let $k>\frac{N-2}{2}$ be an even integer. 
Then there exist \(\varepsilon>0\) and a continuous family
\(\{u_\alpha\}_{|\alpha-\alpha_k|<\varepsilon}\) of positive classical solutions to
\eqref{eq:Henon} with \(p=p_\alpha\), bifurcating from
\((\alpha_k,U_{\alpha_k})\), such that \(u_{\alpha_k}=U_{\alpha_k}\), and
\(u_\alpha\) is non-radial for \(0<|\alpha-\alpha_k|<\varepsilon\).
 Moreover, each $u_\alpha$ is $O(N-1)\times O(1)$-invariant and satisfies the decay estimate
\[
u_\alpha(x)=O(|x|^{2-N}) \qquad \text{as } |x|\to\infty.
\]
\end{theorem}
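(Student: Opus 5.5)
We will follow the strategy announced in the abstract: pass to the cylinder with the Emden--Fowler change of variables, and apply Crandall--Rabinowitz bifurcation with the bifurcation parameter $\alpha$ itself.

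\textbf{Step 1: the cylinder equation.} Write $x=e^{-t}\theta$ with $t\in\mathbb R$ and $\theta\in S^{N-1}$, and set $u(x)=|x|^{-\frac{N-2}{2}}v(t,\theta)$. Because $p_\alpha$ is exactly the critical exponent for this scaling, the weight $|x|^\alpha$ is absorbed and \eqref{eq:Henon} becomes
\[
-v_{tt}-\Delta_{S^{N-1}}v+\tfrac{(N-2)^2}{4}v=v^{p_\alpha}\quad\text{on }\mathbb R\times S^{N-1}.
\]
The solution $U_\alpha$ corresponds to $V_\alpha(t)=c\,(\cosh(\beta t))^{-\frac{N-2}{2+\alpha}}$ (after normalising constants), where $\beta=\frac{(2+\alpha)}{2}$. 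The decay $u=O(|x|^{2-N})$ at infinity and boundedness at the origin translate into $v=O(e^{-\frac{N-2}{2}|t|})$. We therefore work in a weighted space of functions even in $t$ and invariant under $O(N-1)$ acting on $\theta$. Oddness in $x$ would correspond to $t\mapsto t$ with $\theta\mapsto-\theta$, so the $O(1)$ factor is the reflection in $\theta_N$. Concretely, we use $X=\{v\in C^{2,\gamma}:\ e^{\frac{N-2}{2}|t|}v\ \text{bounded},\ \text{symmetric}\}$, and we write $F(\alpha,w)=\mathcal L w-(V_\alpha+w)_+^{p_\alpha}+V_\alpha^{p_\alpha}$. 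The linear part $-\partial_{tt}-\Delta_S+\frac{(N-2)^2}{4}$ is invertible between these weighted spaces, and the nonlinearity decays faster than the weight, so $F$ is a Fredholm operator of index zero on $X$.

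\textbf{Step 2: the kernel.} Linearising at $V_\alpha$ and expanding in spherical harmonics of degree $j$, which have eigenvalue $j(j+N-2)$, reduces the problem to the one-dimensional operators
\[
-\phi''+\Big(\tfrac{(N-2)^2}{4}+j(j+N-2)\Big)\phi-p_\alpha V_\alpha^{p_\alpha-1}\phi .
\]
The potential $p_\alpha V_\alpha^{p_\alpha-1}$ is a multiple of $\sech^2(\beta t)$, so these are Pöschl--Teller operators with explicit eigenvalues $-\beta^2(s-n)^2$ for $n=0,1,\dots$. A solution decaying in $X$ exists exactly when $\frac{N-2}{2}+j$ equals $\beta(s-n)$. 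At $\alpha=\alpha_k$ this holds precisely for $j=k$ (with $n$ even, giving a $t$-even eigenfunction) together with the trivial $j=0$ dilation mode. The dilation mode is odd in $t$, so the evenness constraint kills it. Restricting to $O(N-1)\times O(1)$-invariant harmonics, for even $k$ there is a single zonal harmonic $P_k(\theta_N)$, which is invariant under $\theta_N\mapsto-\theta_N$ only because $k$ is even. Here we will need $k>\frac{N-2}{2}$ to make $n$ an admissible even integer with a $t$-even eigenfunction, and to rule out other $(j,n)$ coincidences. The conclusion is that the kernel is one-dimensional, spanned by $\phi_k(t)P_k(\theta_N)$.

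\textbf{Step 3: transversality (main obstacle).} We must verify the Crandall--Rabinowitz condition $\partial_\alpha\partial_wF(\alpha_k,0)[\phi_kP_k]\notin\operatorname{Range}$. By Fredholm alternative and self-adjointness this is equivalent to
\[
\int\partial_\alpha\big(p_\alpha V_\alpha^{p_\alpha-1}\big)\big|_{\alpha_k}\,\phi_k^2\,dt\neq0 .
\]
Using the Pöschl--Teller structure, this derivative equals the derivative in $\alpha$ of the eigenvalue $-\beta^2(s-n)^2$ crossing the level $-\big(\frac{N-2}{2}+k\big)^2$, by a Hellmann--Feynman argument. We will compute it explicitly from $\beta(\alpha)$ and $s(\alpha)$ and check that it is nonzero, i.e.\ that the eigenvalue crosses with nonzero speed. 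This computation is the delicate point; if the explicit formula turns out to be messy, we will instead evaluate the integral with Beta-function identities for $\sech$ powers.

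\textbf{Step 4: conclusion.} Crandall--Rabinowitz then yields a continuous curve $(\alpha(s),w(s))$ with $w(s)=s\,\phi_kP_k+o(s)$. The bifurcation slope $\alpha'(0)$ is given by a cubic/quadratic coefficient. The quadratic term $\int V^{p-2}(\phi_kP_k)^3$ has angular factor $\int P_k^3$, which is generally nonzero for even $k$. Hence $\alpha'(0)\neq0$, the curve is non-vertical, and we can reparametrise it by $\alpha$ near $\alpha_k$. Non-radiality for $\alpha\neq\alpha_k$ follows from $s\neq0$. Positivity for small $s$ comes from $|w|\le Cs\,V_\alpha$, using the comparable exponential decay. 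Classical regularity, including at the origin (removable singularity, since $u$ is bounded there), and the decay $O(|x|^{2-N})$ follow by undoing the change of variables.
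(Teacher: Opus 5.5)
Your transversality step is correct, and slicker than the paper's direct integration. With $m=\frac{N-2}{2}$, the ground-state energy of $-\partial_t^2-p_\alpha V_\alpha^{p_\alpha-1}$ is $-(m+\beta)^2$, so Hellmann--Feynman gives $\int_{\mathbb R}\partial_\alpha\bigl(p_\alpha V_\alpha^{p_\alpha-1}\bigr)\big|_{\alpha_k}\phi_k^2\,dt=(m+k)\int_{\mathbb R}\phi_k^2\,dt>0$. This agrees with the paper's explicit value $\frac{m(2m+3k)}{2}\int_{\mathbb R}\sech^{2(m+2k)/k}(kt)\,dt$ (computed in the variable $p$, with $d\alpha/dp=m$) via $\int\sech^{a+2}=\frac{a}{a+1}\int\sech^{a}$.

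The real obstacle is in Step~4, and there your argument has a genuine gap. Crandall--Rabinowitz alone gives a branch through $(\alpha_k,V_{\alpha_k})$ that could lie entirely in $\{\alpha=\alpha_k\}$. That is what Gladiali--Grossi--Neves obtained, and what their conjecture predicts. Getting solutions with $\alpha\neq\alpha_k$, and reparametrizing by $\alpha$, requires $\alpha'(0)\neq0$, i.e.\ $\int_{\mathbb S^{N-1}}P_k^3\,d\theta\neq0$. ``Generally nonzero'' is not a proof of the one new fact the theorem rests on. The paper proves this integral is positive:
\begin{itemize}
\item write $P_k=C_k^{(m)}$ and use the Gegenbauer linearization $\bigl(C_k^{(m)}\bigr)^2=\sum_{j=0}^k b_j\,C^{(m)}_{2k-2j}$, where all $b_j>0$ (Rogers' formula, or Gasper's positivity theorem as cited in the paper);
\item since $k$ is even, the index $2k-2j=k$ actually occurs, at $j=k/2$;
\item orthogonality in $L^2\bigl((1-x^2)^{m-1/2}dx\bigr)$ then gives $\int P_k^3=b_{k/2}\int P_k^2>0$.
\end{itemize}
For odd $k$ this integral vanishes by parity, so evenness of $k$ is used here, not only for the $O(1)$-invariance.

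Two further points need repair.

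First, the space in Step~1 is wrong. The weight $e^{\frac{N-2}{2}|t|}$ sits exactly on the indicial root $m$ of $\mathcal P$ (the indicial roots are $\pm(m+l)$). So $\mathcal P$ is not invertible there: $-\phi''+m^2\phi=e^{-m|t|}$ has bounded solution $\sim\frac{1}{2m}|t|e^{-m|t|}$, and the Lockhart--McOwen Fredholm theory does not apply. You must take a weight $\sigma<m$ off the indicial set with $p\sigma>m$ (the paper uses $m/p_0<\sigma<m$), and recover the $e^{-m|t|}$ decay afterwards by bootstrapping mode by mode. In that setting your positivity argument $|w|\le CsV_\alpha$ fails, because $e^{-\sigma|t|}\gg V_\alpha$. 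Instead use $\mathcal P(V_\alpha+w)=(V_\alpha+w)_+^{p_\alpha}\ge0$ together with the maximum principle.

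Second, you misattribute the hypothesis $k>\frac{N-2}{2}$. The kernel computation does not need it: $\frac{N-2}{2}+j=\beta(s-n)$ with $s=(m+\beta)/\beta$ means $j=\beta(1-n)$, which forces $(n,j)=(0,k)$ or $(1,0)$ for every $k$. The hypothesis's actual role is to make $p_{\alpha_k}=1+2k/m>3$, so that the Nemytskii map $v\mapsto(v_+)^p$ is $C^3$ between the weighted H\"older spaces. This makes the branch $C^2$ and justifies the second-order expansion defining $\alpha'(0)$. Your proposal never addresses this regularity issue.
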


A few remarks are as follows.

(i) Combining Theorem~\ref{thm:main} with the global bifurcation theorem from
\cite{GGN} (see also Theorem~\ref{thm:global-cylinder} and Remark~\ref{gbilm} below), we
deduce the existence of a global continuum containing the aforementioned local
bifurcation branch.  More precisely, Proposition~\ref{prop:non-vert} shows
that this local branch is non-vertical with respect to the parameter \(\alpha\);
near \((\alpha_k,U_{\alpha_k})\), it satisfies
\[
\alpha(s)=\alpha_k+\beta_{N,k}s+O(s^2),
\qquad
\beta_{N,k}<0.
\]
Thus the branch actually leaves the hyperplane \(\alpha=\alpha_k\).  The
global continuum containing this branch is either unbounded, meets the critical
endpoint \(\alpha=0\), or connects back to another even bifurcation point
\((\alpha_j,U_{\alpha_j})\) within the same symmetry class.

(ii) Two heuristic arguments were put forward in \cite{GGN} to support their
conjecture.  The first one, \cite[Proposition 1.7]{GGN}, constructs an explicit
branch of non-radial solutions for \(\alpha=2\) and even \(N\), with solutions
depending only on \((|x'|,|x''|)\) under the decomposition
\(\mathbb R^N=\mathbb R^{N/2}\times\mathbb R^{N/2}\).  This branch exploits the
strong symmetry reduction under \(O(N/2)\times O(N/2)\), which effectively
lowers the dimension and brings the problem into a subcritical or critical
regime where bifurcation can be detected explicitly.  In our setting, however,
the non-radial solutions we obtain are invariant under the smaller group
\(O(N-1)\times O(1)\), which does not reduce the dimension in such a drastic
way.  The second concerns the Liouville-type equation in dimension two,
\[
-\Delta u = 2(\alpha+2)^2 |x|^\alpha e^u \quad \text{in } \mathbb R^2,
\]
with finite total curvature, which admits non-radial solutions if and only if
\(\alpha\) is an even integer, as classified in Prajapat--Tarantello
\cite{PT}.  In this two-dimensional setting, the exponential nonlinearity
\(e^u\) is always critical in the sense of the Moser--Trudinger inequality.
These examples thus reflect special critical or dimension-reducing structures,
rather than the genuinely Sobolev supercritical feature of the
higher-dimensional H\'enon equation.  Our result is consistent with the broader
theory of supercritical semilinear elliptic equations; see, e.g.,
Badiale--Serra \cite{BadialeSerra}, Figueroa--Neves \cite{FigueroaNeves},
Boscaggin--Colasuonno--Noris--Weth
\cite{BoscagginColasuonnoNorisWeth2023}, Cowan--Moameni
\cite{CowanMoameni2024} and references therein.

(iii) On the technical front, our proof of the main theorem follows a distinct
strategy compared to the argument in \cite{GGN}.  We first transform the
critical H\'enon equation on \(\mathbb R^N\) into a semilinear elliptic equation
posed on the cylinder
\(\mathcal C = \mathbb R \times \mathbb S^{N-1}\) via the Emden--Fowler change
of variables.  As illustrated below in \eqref{eq:cylinder-general}, this
transformation absorbs the weight term, yielding a standard semilinear elliptic
equation with supercritical nonlinearity.  In these cylindrical coordinates,
the radial solution \(U_\alpha\) reduces to a one-dimensional homoclinic
profile \(W_{p_\alpha}(t)\), and the associated linearized operator takes the
form of a Schr\"odinger operator with a P\"oschl--Teller potential.  This
structural simplification enables an explicit characterization of the
operator's kernel via classical spectral theory and streamlines the original
proof of \cite[Theorem 1.3]{GGN}.  Second, we develop a
Crandall--Rabinowitz bifurcation framework within suitably weighted H\"older
spaces invariant under the \(O(N-1)\times O(1)\) symmetry group.  Our
construction relies on linear elliptic operator theory on complete manifolds
established by Lockhart and McOwen \cite{LM}; see also Pacard \cite{P}.  The
slope computation in Proposition~\ref{prop:non-vert} is the key new ingredient
which shows that the local bifurcation branch leaves the even parameter
\(\alpha_k\).  Last but not least, we supply a concise alternative proof of the
global bifurcation result originally due to Gladiali, Grossi, and Neves.  Our
argument is formulated directly on the cylindrical domain, eliminating the
ball-exhaustion technique employed in their original treatment.

(iv) Very recently, Dai, Duan, Gui and Li \cite{DaiDuanGuiLi2026} extended the
results of Gladiali, Grossi and Neves \cite{GGN} to the critical quasilinear
H\'enon equation involving the \(p\)-Laplacian.  It would be interesting to
investigate whether our bifurcation results persist in that quasilinear setting.
Another natural direction concerns singular solutions.  Radial singular
solutions of the critical H\'enon equation are described, in cylindrical
coordinates, by Fowler or Delaunay-type profiles.  One may therefore ask whether
these radial singular profiles can bifurcate into non-radial singular solutions.
In the classical case \(\alpha=0\), corresponding to the Yamabe equation, such a
non-radial periodic bifurcation does not occur in the isolated-singularity
setting; see Caffarelli--Gidas--Spruck
\cite{CGS}. See also Marques \cite{Marques2008}, Xiong--Zhang
\cite{XiongZhang2022}, and Han--Xiong--Zhang \cite{HanXiongZhang2023} for related studies of isolated singularities.

Before concluding this introduction, we include the following schematic picture to illustrate the distinction between branches that remain at the even parameters and the non-vertical branches constructed in Theorem~\ref{thm:main}.

\begin{figure}[H]
\centering
\includegraphics[width=0.6\textwidth]{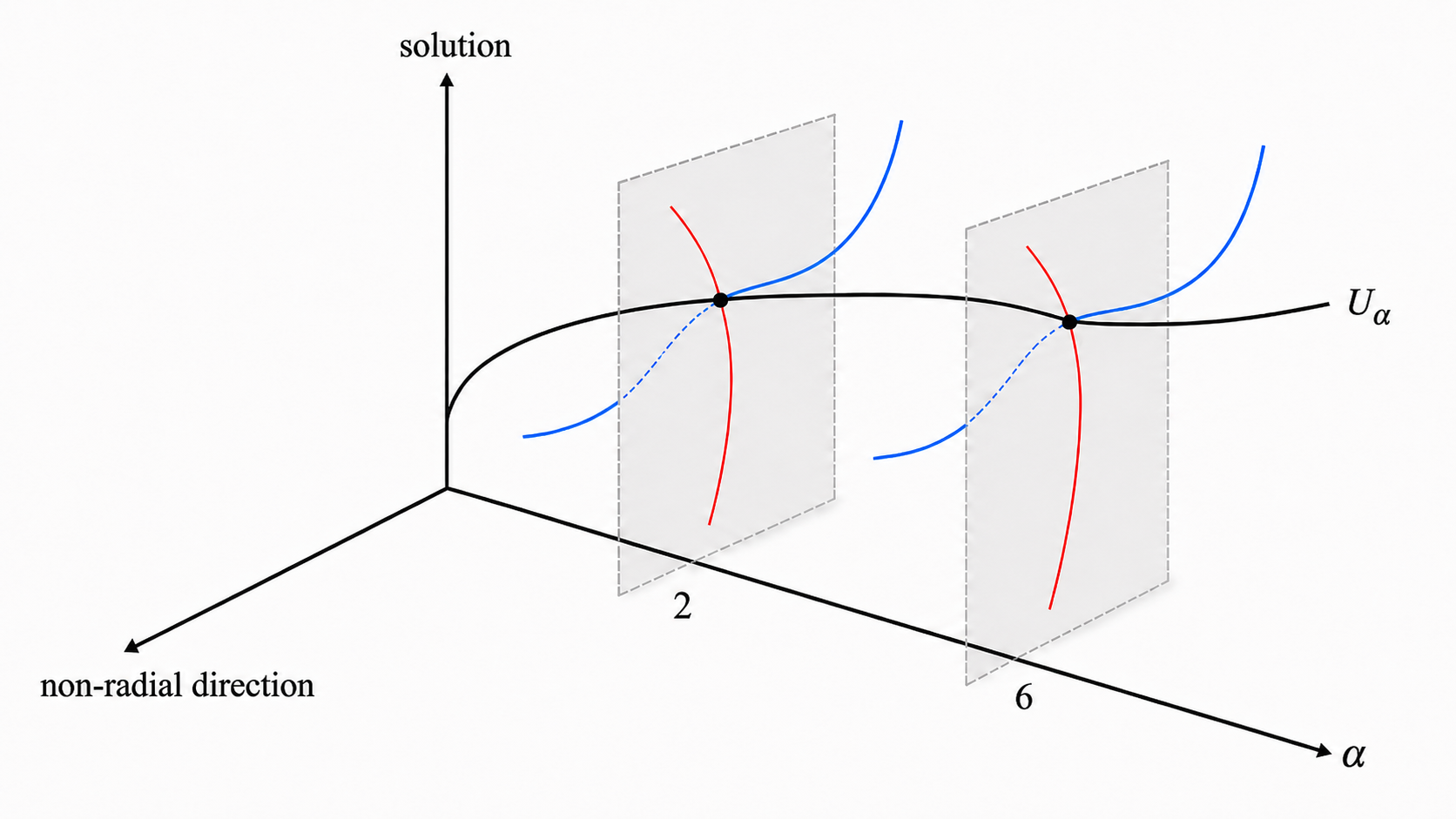}
\caption{Schematic bifurcation picture in the case \(N=3\). The dashed planes correspond to even parameters. 
The black radial branch intersects each plane at the bifurcation point. 
The red branches remain inside the corresponding plane, while the blue branches 
pass through the same point and cross the plane.}
\label{fig:actual}
\end{figure}

The paper is organized as follows. In Section~\ref{sec:cylinder} we derive the
cylinder equation and reprove the kernel characterization in
Theorem~\ref{thm:kernel}. Section~\ref{sec:local} sets up the weighted function spaces, constructs the local Crandall--Rabinowitz branch, and computes its parameter slope. This gives the non-verticality of the branch and completes the proof of Theorem~\ref{thm:main}. Finally, in Section~\ref{sec:global} we give a direct cylindrical proof of the global bifurcation theorem of \cite[Theorem 1.6]{GGN}.

\section{Cylinder formulation and the linearized operator}\label{sec:cylinder}

Let
\begin{equation}
    \label{clvar}
    r=|x|=e^t,
    \qquad
    \theta=\frac{x}{|x|}\in\mathbb S^{N-1},
    \qquad
    \mathcal C=\mathbb R\times\mathbb S^{N-1}.
\end{equation}
For any positive solution of \eqref{eq:Henon}, setting
\begin{equation}
    \label{clfun}
    w(t,\theta)= r^{\frac{N-2}{2}} u(x)
\end{equation}
yields
\begin{equation}\label{eq:cylinder-general}
    \mathcal P w:=-w_{tt}-\Delta_{\mathbb S^{N-1}}w+m^2w=w^{p_{\alpha}} \quad \text{on } \mathcal C,
\end{equation}
where \(\Delta_{\mathbb S^{N-1}}\) denotes the Laplace--Beltrami operator on \(\mathbb S^{N-1}\), and
\begin{equation}\label{eq:mdef}
    m=\frac{N-2}{2}.
\end{equation}
Note that \eqref{eq:cylinder-general} is Sobolev supercritical when \(\alpha>0\), since \(p_\alpha>\frac{N+2}{N-2}\).

By the standard ODE classification of positive homoclinic solutions (see, e.g., \cite{Perko}), these solutions are given, up to translation, by
\begin{equation}\label{eq:Wp-general}
\begin{split}
W_{p_{\alpha}}(t)
&= \left( \frac{m^2(p_{\alpha}+1)}{2} \right)^{\frac{1}{p_{\alpha}-1}}
\operatorname{sech}^{\frac{2}{p_{\alpha}-1}} \left( \frac{m(p_{\alpha}-1)}{2}t \right), \\
&= \bigl( m(m+q_{\alpha}) \bigr)^{\frac{m}{2q_{\alpha}}}
\operatorname{sech}^{\frac{m}{q_{\alpha}}}(q_{\alpha}t),
\end{split}
\end{equation}
where \(\sech t= \frac{2}{e^{-t}+e^t}\) is the hyperbolic secant and
\[
q_{\alpha}=\frac{\alpha+2}{2}.
\]
Note that \(W_{p_{\alpha_k}}\) corresponds exactly to the radial solution \(U_{\alpha_k}\) in \eqref{eq:Ualpha}. The linearized operator at \(W_{p_{\alpha}}(t)\) reads
\begin{equation}
    \label{lopt}
    L_{p_{\alpha}} = \mathcal P - p_\alpha W_{p_{\alpha}}^{p_\alpha-1}
    = \mathcal P - (m+q_{\alpha})(m+2q_{\alpha})\sech^2(q_{\alpha}t).
\end{equation}

Let \(H^{1}(\mathcal C)\) be the usual Sobolev space on the cylinder. Since \(m>0\), we shall use the equivalent norm
\[
\|f\|_{H^1(\mathcal C)}^2
:=
\int_{\mathcal C}
\left(
|\partial_t f|^2
+
|\nabla_{\mathbb S^{N-1}}f|^2
+
m^2 f^2
\right)\,dt\,d\theta.
\]
Under the Emden--Fowler transform \eqref{clvar}--\eqref{clfun}, the norm \(\|f\|_{H^1(\mathcal C)}\) is equivalent to the \(D^{1,2}(\mathbb R^N)\)-norm of \(u\), as used in \cite{GGN}. 

The kernel space
\[
\ker L_{p_{\alpha}}
=
\left\{ f\in H^1(\mathcal C) : L_{p_{\alpha}} f=0 \text{ in the distribution sense} \right\}
\]
was characterized in Theorem 1.3 of \cite{GGN} in the original coordinates. In cylindrical coordinates, it can be reformulated as follows.

\begin{theorem}\label{thm:kernel}
Let \(W_{p_\alpha}\) and \(L_{p_{\alpha}}\) be as in \eqref{eq:Wp-general} and \eqref{lopt}, respectively. Then:
\begin{itemize}
    \item If \(q_{\alpha}\notin\mathbb N\), that is, if \(\alpha>0\) is not an even integer, then
\[
\ker L_{p_{\alpha}}=\operatorname{span}\{W_{p_{\alpha}}'(t)\}.
\]
\item If \(q_{\alpha}=k\in\mathbb N\), that is, if \(\alpha=2(k-1)\), then
\[
\ker L_{p_{\alpha}}
=
\operatorname{span}\{W_{p_{\alpha}}'(t)\}
\oplus
\left\{
\varphi_k(t)Y(\theta) : Y\in \mathcal Y_k
\right\},
\]
where \(\mathcal Y_k\) is the space of spherical harmonics of degree \(k\) on \(\mathbb S^{N-1}\), and
\begin{equation}
    \label{phik}
    \varphi_k(t)=\sech^{\frac{m+k}{k}}(kt),\qquad t\in\mathbb R.
\end{equation}
\end{itemize}
\end{theorem}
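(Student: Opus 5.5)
Decompose $f\in\ker L_{p_\alpha}$ in spherical harmonics: $f(t,\theta)=\sum_{j\ge0}\sum_i f_{j,i}(t)Y_{j,i}(\theta)$, where $-\Delta_{\mathbb S^{N-1}}Y_{j,i}=j(j+N-2)Y_{j,i}$. Since $L_{p_\alpha}$ commutes with the spherical Laplacian, each coefficient $f_{j,i}\in H^1(\mathbb R)$ solves the one-dimensional equation
\[
-f''+\bigl(m^2+j(j+2m)\bigr)f-(m+q)(m+2q)\sech^2(qt)\,f=0,\qquad q=q_\alpha .
\]
Here $m^2+j(j+2m)=(m+j)^2$. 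Rescaling $s=qt$ turns this into $-g''-\nu(\nu+1)\sech^2(s)g=-E\,g$ with
\[
\nu=\frac{m}{q}+1,\qquad E=\Bigl(\frac{m+j}{q}\Bigr)^2 .
\]
To check: $(m+q)(m+2q)/q^2=(\nu)(\nu+1)$ with $\nu=(m+q)/q$. So the kernel question becomes whether $-E$ is an $L^2$ eigenvalue of the Pöschl--Teller operator $-\partial_s^2-\nu(\nu+1)\sech^2 s$.

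The classical Pöschl--Teller theory (which I would recall, or rederive by substituting $g=\sech^{\kappa}(s)h(\tanh s)$ and reducing to a hypergeometric/Gegenbauer equation) gives the following facts. The bound states are exactly $-\kappa_n^2$ with $\kappa_n=\nu-n>0$, $n=0,1,\dots$. Each is simple, with eigenfunction $\sech^{\kappa_n}(s)P_n(\tanh s)$ for a polynomial $P_n$ of degree $n$ and parity $(-1)^n$. Hence we need
\[
\frac{m+j}{q}=\nu-n=\frac{m}{q}+1-n,
\]
that is, $j=q(1-n)$ with $j\ge0$ and $n\ge0$. So only $n\in\{0,1\}$ can occur.

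The two cases are handled as follows.
\begin{itemize}
\item The case $n=1$ forces $j=0$. This is the radial mode, whose eigenfunction is odd, namely $\sech^{\nu-1}(s)\tanh s\propto W_{p_\alpha}'$. Indeed, one can verify directly that $W'$ solves the $j=0$ equation by differentiating the ODE for $W$.
\item The case $n=0$ forces $j=q$. This requires $q\in\mathbb N$, and then gives $g=\sech^{\nu}(s)$, i.e. $f_{j,i}=\sech^{(m+k)/k}(kt)=\varphi_k$ with $j=k$.
\end{itemize}
For $q\notin\mathbb N$, no nonzero $j$ survives. Uniqueness within each $(j,i)$ follows because the equation is a second-order ODE with at most one decaying solution at $+\infty$ (the solution space is spanned by $e^{\pm(m+j)t}$ asymptotically). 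Hence the decaying solutions form at most a one-dimensional space, and it is nonzero exactly in the cases above. Reassembling the modes gives the stated decomposition, with $\varphi_k(t)Y$ for $Y\in\mathcal Y_k$.

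The remaining technical points are these. First, justify the expansion for an $H^1$ distributional solution; elliptic regularity makes $f$ smooth, and the coefficients then lie in $H^1(\mathbb R)$ and solve the ODE classically. Second, ensure that no other $H^1$ solutions occur. This is the main obstacle: one must cite, or cleanly prove, the completeness of the Pöschl--Teller bound-state list, especially for non-integer $\nu$. My first attempt would be the factorization (supersymmetric ladder) argument. I would write $-\partial_s^2-\nu(\nu+1)\sech^2 s+\nu^2=A_\nu^*A_\nu$ with $A_\nu=\partial_s+\nu\tanh s$, so that
\[
A_\nu A_\nu^*=-\partial_s^2-(\nu-1)\nu\sech^2 s+\nu^2 .
\]
Bound states of depth $\nu(\nu+1)$ then correspond, via $A_\nu$, to those of depth $(\nu-1)\nu$ with the same energy, apart from the ground state $\ker A_\nu=\operatorname{span}\{\sech^\nu s\}$. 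Inducting downward until $\nu\le0$, where the potential is nonnegative and there are no negative eigenvalues, yields exactly $\{-(\nu-n)^2\}$.
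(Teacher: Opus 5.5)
Your proposal is correct and follows essentially the same route as the paper: you decompose in spherical harmonics, rescale to the P\"oschl--Teller operator $H_\nu$ with $\nu=(m+q_\alpha)/q_\alpha$ and eigenvalue $-((m+l)/q_\alpha)^2$, prove completeness of the bound-state list with the factorization $A_\rho^*A_\rho=H_\rho+\rho^2$, $A_\rho A_\rho^*=H_{\rho-1}+\rho^2$ and downward induction to a nonnegative potential, and get simplicity from the existence of at most one decaying solution, which is equivalent to the paper's Wronskian argument. The only cosmetic difference is that you identify the $l=0$ mode explicitly as $\sech^{\nu-1}(s)\tanh s\propto W_{p_\alpha}'$, whereas the paper combines one-dimensionality of the eigenspace with the fact that $W_{p_\alpha}'\in\ker L_{p_\alpha}$.
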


We shall give a short proof of the above theorem using the classical spectral theory of the P\"oschl--Teller operator in quantum mechanics. For \(\nu>0\), let
\begin{equation} \label{eq:PT-operator}
H_\nu
=
-\frac{d^2}{d\tau^2}-\nu(\nu+1)\sech^2\tau, \qquad \tau\in\mathbb R,
\end{equation}
which is a self-adjoint operator on \(L^2(\mathbb R)\) with domain \(H^2(\mathbb R)\).

For \(\rho>0\), define
\[
A_\rho=\frac{d}{d\tau}+\rho\tanh\tau,
\qquad
A_\rho^*=-\frac{d}{d\tau}+\rho\tanh\tau.
\]

\begin{lemma}\label{lem:PT-spectrum}
The negative eigenvalues of \(H_\nu\) are exactly
\begin{equation*}
E_j=-(\nu-j)^2,
\qquad
j\in \{0\}\cup\mathbb N \text{ and } \nu-j>0.
\end{equation*}
Moreover, the eigenspace corresponding to \(E_j\) is one-dimensional and is generated by
\[
\psi_j^{(\nu)}
=
A_\nu^*A_{\nu-1}^*\cdots A_{\nu-j+1}^*
\bigl(\sech^{\nu-j}\tau\bigr),
\]
with the convention that the empty product is the identity. In particular, the eigenspace for \(E_0=-\nu^2\) is generated by \(\sech^\nu\tau\).
\end{lemma}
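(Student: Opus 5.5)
The plan is the standard supersymmetric factorization (shape invariance) of the Pöschl--Teller family. First I verify the two operator identities
\[
A_\rho^*A_\rho=-\frac{d^2}{d\tau^2}-\rho(\rho+1)\sech^2\tau+\rho^2,
\qquad
A_\rho A_\rho^*=-\frac{d^2}{d\tau^2}-\rho(\rho-1)\sech^2\tau+\rho^2 .
\]
Both follow from \((\tanh\tau)'=\sech^2\tau\) and \(\tanh^2\tau=1-\sech^2\tau\). They give
\[
H_\nu=A_\nu^*A_\nu-\nu^2,
\qquad
H_{\nu-1}=A_\nu A_\nu^*-\nu^2 .
\]
The first formula yields \(H_\nu\ge-\nu^2\). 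Moreover, \(H_\nu f=-\nu^2f\) holds exactly when \(A_\nu f=0\), i.e. \(f'=-\nu\tanh\tau\,f\), so \(f=c\,\sech^\nu\tau\), which lies in \(H^2(\mathbb R)\). This gives the ground state \(E_0=-\nu^2\) and its one-dimensional eigenspace. I also record the intertwining relations
\[
H_\nu A_\nu^*=A_\nu^*H_{\nu-1},
\qquad
A_\nu H_\nu=H_{\nu-1}A_\nu ,
\]
which are immediate from the factorizations.

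Next I run an induction on \(j\) to obtain the full negative spectrum. Suppose \(E<-\ \)\((\nu)^2\) is excluded and \(E\in(-\nu^2,0)\) is an eigenvalue of \(H_\nu\) with eigenfunction \(f\). Then \(A_\nu f\neq0\), since otherwise \(E=-\nu^2\). Also \(A_\nu f\in H^1\), and by elliptic regularity of the ODE it lies in \(H^2\). It satisfies \(H_{\nu-1}A_\nu f=EA_\nu f\). So \(A_\nu\) maps the \(E\)-eigenspace of \(H_\nu\) injectively into the \(E\)-eigenspace of \(H_{\nu-1}\).

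Conversely, \(A_\nu^*\) maps an \(E\)-eigenfunction \(g\) of \(H_{\nu-1}\) to an \(E\)-eigenfunction of \(H_\nu\). This map is injective because \(A_\nu A_\nu^*g=(E+\nu^2)g\neq0\). Hence for \(E\neq-\nu^2\) the negative eigenvalues of \(H_\nu\) and \(H_{\nu-1}\) coincide, with equal multiplicities.

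When \(\nu-1\le0\), the potential \(-\nu(\nu-1)\sech^2\tau\) is nonnegative (for \(0<\nu\le 1\)), so \(H_{\nu-1}\ge0\) has no negative eigenvalues. For \(\nu>1\), iterating reduces to \(H_{\nu-1},H_{\nu-2},\dots\) down to an index \(\nu-j\in(0,1]\). By induction on \(\lfloor\nu\rfloor\), the negative eigenvalues of \(H_{\nu-1}\) are \(-(\nu-1-i)^2\) for \(\nu-1-i>0\), each simple. Reindexing with \(j=i+1\) and adding the ground state gives exactly \(E_j=-(\nu-j)^2\) with \(\nu-j>0\), each one-dimensional. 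The eigenfunction is obtained by applying \(A_\nu^*\) to the eigenfunction of \(H_{\nu-1}\), which unwinds to \(A_\nu^*\cdots A_{\nu-j+1}^*\sech^{\nu-j}\tau\). Simplicity can also be seen directly, since for a one-dimensional Schrödinger operator with decaying potential, \(L^2\) eigenvalues below the essential spectrum \([0,\infty)\) are simple (a Wronskian argument).

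The main obstacle is the bookkeeping at the threshold and the case of non-integer \(\nu\). I need to ensure that the recursion stops correctly, so that no spurious eigenvalue appears at \(0\) or in \((-\ \!(\nu-j)^2,0)\). This is handled by positivity of \(H_{\nu'}\) for \(\nu'\in(-1,0]\), or equivalently by noting that \(A_{\nu'}^*A_{\nu'}\ge0\). I also need to check that each \(\psi_j^{(\nu)}\) is genuinely in \(L^2\) and is nonzero. It decays like \(e^{-(\nu-j)|\tau|}\), since each \(A^*\) preserves exponential decay rates. It is nonzero by the injectivity computed above.
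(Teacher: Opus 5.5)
Your proposal is correct and follows essentially the same route as the paper. Both use the factorizations $A_\rho^*A_\rho=H_\rho+\rho^2$ and $A_\rho A_\rho^*=H_{\rho-1}+\rho^2$ with the intertwining relations, lower an eigenfunction until the index reaches $(0,1]$ where $H_{\rho-1}$ has a nonnegative potential, and build $\psi_j^{(\nu)}$ with the raising operators $A^*$. The only difference is bookkeeping: you get simplicity from the injectivity of both $A_\nu$ and $A_\nu^*$ between eigenspaces, carried through the induction, whereas the paper proves simplicity with the separate Wronskian argument you also mention.
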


\begin{proof}
For the reader's convenience, we recall a proof following \cite[Section 9.1]{Teschl}.
A direct computation gives
\begin{equation*}
A_\rho^*A_\rho
=
-\frac{d^2}{d\tau^2}+\rho^2-\rho(\rho+1)\sech^2\tau
=
H_\rho+\rho^2
\end{equation*}
and
\begin{equation*}
A_\rho A_\rho^*
=
-\frac{d^2}{d\tau^2}+\rho^2-\rho(\rho-1)\sech^2\tau
=
H_{\rho-1}+\rho^2.
\end{equation*}
Thus
\begin{equation}\label{eq:intertwining}
H_\rho A_\rho^*=A_\rho^*H_{\rho-1}
\quad\text{and}\quad
A_\rho H_\rho=H_{\rho-1}A_\rho.
\end{equation}
Since \(A_\rho \sech^\rho\tau=0\), we have
\[
H_\rho \sech^\rho\tau
=
(A_\rho^*A_\rho-\rho^2)\sech^\rho\tau
=
-\rho^2 \sech^\rho\tau.
\]
For \(j\in \{0\}\cup\mathbb N\) and \(\nu-j>0\), define
\begin{equation}\label{eq:psi-j-def}
\psi_j^{(\nu)}(\tau)
:=
A_\nu^*A_{\nu-1}^*\cdots A_{\nu-j+1}^*
\bigl(\sech^{\nu-j}\tau\bigr),
\end{equation}
with the convention that the empty product is the identity. Obviously, \(\psi_j^{(\nu)}\in L^2(\mathbb R)\). Since \(\ker A_\rho^*=\operatorname{span}\{\cosh^\rho\tau\}\), it is easy to see that \(\psi_j^{(\nu)}\not\equiv 0\).
Applying the first identity in \eqref{eq:intertwining} successively yields
\[
H_\nu\psi_j^{(\nu)}=-(\nu-j)^2\psi_j^{(\nu)}.
\]
Thus \(\psi_j^{(\nu)}\) are eigenfunctions associated to \(E_j=-(\nu-j)^2\).

It remains to prove that there are no other negative eigenvalues. Suppose that \(E<0\) is an eigenvalue and \(\phi\in L^2(\mathbb R)\) is an associated eigenfunction. Define the lowering sequence by
\[
\phi_0=\phi,\qquad
\phi_{j+1}=A_{\nu-j}\phi_j \quad\text{for } j\in\mathbb N,
\]
as long as the right-hand side is nonzero. By the second identity in \eqref{eq:intertwining}, whenever \(\phi_{j+1}\not\equiv 0\), we have
\[
H_{\nu-j-1}\phi_{j+1}=E\phi_{j+1}.
\]
Thus the same negative eigenvalue is transferred from \(H_\nu\) to operators with lower parameters.

If the procedure stops after \(j\) steps, namely \(A_{\nu-j}\phi_j=0\), then using
\[
A_{\nu-j}^*A_{\nu-j}=H_{\nu-j}+(\nu-j)^2
\]
we get
\[
H_{\nu-j}\phi_j=-(\nu-j)^2\phi_j.
\]
Since the eigenvalue has been preserved along the lowering procedure, it follows that \(E=-(\nu-j)^2\).

Otherwise, the procedure reaches a parameter \(\nu-j\in(0,1]\) and still has a nonzero negative eigenfunction of \(H_{\nu-j}\) which is not annihilated by \(A_{\nu-j}\). In the latter case, \(A_{\nu-j}\phi_j\) would be a nonzero eigenfunction of \(H_{\rho-1}\), where \(\rho=\nu-j\), with the same negative eigenvalue. But since \(0<\rho\le 1\),
\[
H_{\rho-1}
=
-\frac{d^2}{d\tau^2}
+\rho(1-\rho)\sech^2\tau.
\]
For \(v\in C_c^\infty(\mathbb R)\), integration by parts gives
\[
\begin{aligned}
\langle H_{\rho-1}v,v\rangle
&=
\int_{\mathbb R}
\left(
-v''+\rho(1-\rho)\sech^2\tau\,v
\right)v\,d\tau \\
&=
\int_{\mathbb R}|v'|^2\,d\tau
+
\rho(1-\rho)
\int_{\mathbb R}\sech^2\tau\, |v|^2\,d\tau
\ge 0.
\end{aligned}
\]
By density, the same identity defines the quadratic form for all \(v\in H^1(\mathbb R)\); hence \(H_{\rho-1}\) has nonnegative quadratic form. It follows that \(H_{\rho-1}\) has no negative \(L^2\)-eigenvalue. We obtain a contradiction, and hence the lowering procedure must stop.

Finally, we prove that each \(L^2\)-eigenspace is one-dimensional. Indeed, let \(u,v\in L^2(\mathbb R)\) solve
\begin{equation}
    \label{Heq}
    H_\nu u=Eu,\qquad H_\nu v=Ev,\qquad E<0.
\end{equation}
Their Wronskian is
\[
\mathcal W(u,v):=uv'-u'v.
\]
By \eqref{Heq}, we have
\[
\mathcal W'(u,v)=0,
\]
so it is constant. Since \(E<0\) and the potential
\(-\nu(\nu+1)\sech^2\tau\) in \eqref{eq:PT-operator} tends to \(0\) as \(|\tau|\to\infty\), every
\(L^2\)-solution decays exponentially at both ends. Therefore
\[
\mathcal W(u,v)(\tau)\to 0
\quad\text{as }|\tau|\to\infty.
\]
Thus \(\mathcal W(u,v)\equiv 0\), and \(u\) and \(v\) are linearly dependent. Hence each negative \(L^2\)-eigenspace of \(H_\nu\) is one-dimensional.

Since we have already constructed a nonzero eigenfunction
\(\psi_j^{(\nu)}\) for \(E_j=-(\nu-j)^2\), this eigenfunction spans the
corresponding eigenspace. In particular, for \(j=0\), the eigenspace of
\(E_0=-\nu^2\) is generated by \(\sech^\nu\tau\).
The lemma is proved.
\end{proof}

\begin{proof}[Proof of Theorem~\ref{thm:kernel}]
Suppose \(v\in\ker L_{p_{\alpha}}\). By elliptic regularity, \(v\) is smooth. By Fourier expansion,
\[
v(t,\theta)=\sum_{l\ge0} f_l(t)Y_l(\theta),
\]
where \(f_l\in L^2(\mathbb R)\) and \(Y_l\) denotes the \(l\)-th spherical harmonic, satisfying
\[
-\Delta_{\mathbb S^{N-1}}Y_l=l(l+2m)Y_l.
\]
Each coefficient \(f_l\) satisfies
\begin{equation}\label{eq:modeq}
-f_l''+\bigl[(m+l)^2-(m+q_{\alpha})(m+2q_{\alpha})\sech^2(q_{\alpha}t)\bigr]f_l=0.
\end{equation}
Setting \(\tau=q_{\alpha}t\) and \(\nu=\frac{m+q_{\alpha}}{q_{\alpha}}\), we immediately find that \eqref{eq:modeq} becomes
\begin{equation}
    \label{Hnueq}
    H_{\nu}f_l
=
-\left(\frac{m+l}{q_{\alpha}}\right)^2f_l.
\end{equation}
Since \(v\in H^1(\mathcal C)\), each nonzero coefficient \(f_l\in L^2(\mathbb R)\). Hence, by Lemma \ref{lem:PT-spectrum},
\[
\left(\frac{m+l}{q_{\alpha}}\right)^2=(\nu-j)^2
\quad\text{for some } j\in\{0\}\cup\mathbb N \text{ with } \nu-j>0.
\]
Thus \(l=q_{\alpha}(1-j)\ge 0\), which implies that either \(l=0\) or \(l=q_{\alpha}\).

If \(l=0\), this is the radial cylindrical mode. Since \(W_{p_\alpha}'\) is one such mode, the one-dimensionality gives
\[
f_0(t)\in \operatorname{span}\{W_{p_\alpha}'(t)\}.
\]
The case \(l=q_{\alpha}\) occurs only when \(\alpha=2(k-1)\) for some \(k\in\mathbb N\). In that case, \(j=0\) and \(q_\alpha=k\). Recalling \eqref{eq:modeq}, we have
\[
-f_k''+\left[(m+k)^2-(m+k)(m+2k)\sech^2(kt)\right]f_k=0.
\]
Setting \(\tau=kt\) and \(\nu=\frac{m+k}{k}\), the equation \eqref{Hnueq} and Lemma \ref{lem:PT-spectrum} imply that the eigenspace corresponding to the eigenvalue \(-\nu^2\) is one-dimensional and is generated by \(\sech^\nu\tau\).
Therefore, up to a constant factor,
\[
f_k(t)=\sech^\nu(kt)
=
\sech^{\frac{m+k}{k}}(kt),
\]
and the angular factor is an arbitrary spherical harmonic of degree \(k\).
The theorem is proved.
\end{proof} 

\section{Direct local bifurcation}
\label{sec:local}

Let $\alpha_k = 2(k-1)$ with $k > \frac{N-2}{2}$, so that
\[
p_{\alpha_k} = 1 + \frac{2k}{m} > 3.
\]
Choose $\gamma$ and $\sigma$ such that
\begin{equation}\label{eq:gamma-choice}
0 < \gamma < \min\{1, p_0-1, p_{\alpha_k}-3\},
\end{equation}
and
\begin{equation}\label{eq:sigma-general}
\frac{m}{p_0} < \sigma < m.
\end{equation}
For $j \ge 0$, define the weighted H\"older space
\[
C^{j,\gamma}_\sigma(\mathcal C)
:=
\left\{
v \in C^{j,\gamma}_{\mathrm{loc}}(\mathcal C) :
\|v\|_{C^{j,\gamma}_\sigma} < \infty
\right\},
\]
equipped with the norm
\begin{equation}\label{vnorm}
\|v\|_{C^{j,\gamma}_\sigma}
:=
\sup_{\tau \in \mathbb R}
e^{\sigma|\tau|}
\|v\|_{C^{j,\gamma}((\tau-1,\tau+1) \times \mathbb S^{N-1})},
\end{equation}
where $C^{j,\gamma}((\tau-1,\tau+1) \times \mathbb S^{N-1})$ denotes the standard H\"older space.

A straightforward calculation yields the following Nemytskii-type result.

\begin{lemma}\label{lem:nemytskii-local}
Let $[\underline p, \overline p] \subset (3,\infty)$ and assume that $\gamma$ in \eqref{eq:gamma-choice} satisfies $\gamma < \underline p - 3$. Then the map
\[
(p,v) \longmapsto (v_+)^p
\]
is of class $C^3$ from $[\underline p, \overline p] \times C^{2,\gamma}_\sigma(\mathcal C)$ to $C^{0,\gamma}_\sigma(\mathcal C)$, where $v_+ := \max\{v,0\}$. Moreover, its derivatives are given by
\begin{align*}
D_v(v_+)^p[h] &= p(v_+)^{p-1}h, \\
D_v^2(v_+)^p[h_1,h_2] &= p(p-1)(v_+)^{p-2}h_1h_2, \\
D_p(v_+)^p &= (v_+)^p \log(v_+),
\end{align*}
with the convention that $s^a(\log s)^b = 0$ at $s=0$ whenever $a>0$.
\end{lemma}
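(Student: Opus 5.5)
The proof reduces everything to a pointwise scalar statement plus two structural facts about the weighted norm: a local Hölder estimate and a weight bookkeeping. Let $F(p,s):=(s_+)^p$ for $s\in\mathbb R$, $p\in[\underline p,\overline p]$. Since $\underline p>3$, $F$ is $C^3$ in $s$ jointly with $p$. The derivatives $\partial_s^a\partial_p^b F$ with $a+b\le 3$ have the form $c\,(s_+)^{p-a}(\log s_+)^b$, with exponent $p-a\ge \underline p-3>\gamma$. Each such derivative is therefore locally $C^{0,\gamma}$ (indeed locally $C^{0,\gamma'}$ for $\gamma'<\underline p-3$) on bounded $s$-sets, uniformly in $p$, and vanishes at $s=0$. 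One must also check that $\partial_p^b\partial_s^a F$ is Hölder continuous in $p$ in the appropriate sense. The plan is to prove $C^3$ of the Nemytskii operator by verifying that the candidate derivatives (the formulas listed, together with the mixed ones) are continuous multilinear maps $C^{2,\gamma}_\sigma\to C^{0,\gamma}_\sigma$. Each is then shown to be the Fréchet derivative of the previous one via the Taylor remainder in integral form.

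First, for fixed $\tau$ work on $Q_\tau=(\tau-1,\tau+1)\times\mathbb S^{N-1}$. Since $v\in C^{2,\gamma}_\sigma$ gives $\|v\|_{C^{0,1}(Q_\tau)}\le Ce^{-\sigma|\tau|}\|v\|$, the standard composition estimate
\[
\|G(v)h_1\cdots h_a\|_{C^{0,\gamma}(Q_\tau)}\le C\,\|G(v)\|_{C^{0,\gamma}(Q_\tau)}\prod\|h_i\|_{C^{0,\gamma}(Q_\tau)}
\]
applies, where $G=\partial_s^a\partial_p^bF(p,\cdot)$. We also have $\|G(v)\|_{C^{0,\gamma}(Q_\tau)}\le C(\|v\|_{L^\infty}+\|v\|_{C^{0,1}}^{\gamma})$, up to log factors bounded by $\|v\|_{L^\infty}^{\underline p-a-\epsilon}$, uniformly for $v$ in bounded sets. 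This holds because $G$ is $C^{0,\gamma}$ on bounded intervals and vanishes at $0$, so it is bounded by a positive power of $|s|$.

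Second, track the weights. The key fact is $\sigma p\ge\sigma\underline p>\sigma\cdot 3>\sigma$. A term with $a$ factors $h_i$ and one factor $G(v)\sim |v|^{p-a-\epsilon}$ decays like
\[
e^{-\sigma|\tau|(a+(p-a-\epsilon)\gamma')}.
\]
Here I must be careful, because the Hölder seminorm of $G(v)$ only decays like $|v|^{\min(1,p-a-\epsilon)}$ times the Lipschitz norm. Using $p-a\ge p-3>\gamma$, the Hölder seminorm of $(v_+)^{p-a}$ on $Q_\tau$ is bounded by $C\|v\|_{L^\infty}^{p-a-1}\|\nabla v\|_{L^\infty}$ when $p-a\ge1$, and by $C\|v\|^{p-a-\gamma}_{L^\infty}\|\nabla v\|^{\gamma}$ otherwise. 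Either way the result decays like $e^{-\sigma(p-a)|\tau|}$, so the product decays at least like $e^{-\sigma p|\tau|}\le e^{-\sigma|\tau|}$. This yields boundedness into $C^{0,\gamma}_\sigma$. Logarithmic factors are absorbed by losing $\epsilon$ in the exponent, which is harmless since $p-3>\gamma$ leaves room.

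Third, for differentiability and continuity of the top derivative, write the remainder, for example
\[
(v+h)_+^p-(v_+)^p-p(v_+)^{p-1}h=\int_0^1\bigl(p((v+th)_+)^{p-1}-p(v_+)^{p-1}\bigr)h\,dt.
\]
The $C^{0,\gamma}_\sigma$ norm is then controlled by $\|h\|$ times the modulus of continuity of $v\mapsto G(v)$ from $C^{2,\gamma}_\sigma$ into local $C^{0,\gamma}$. The same pattern handles the $p$-direction, using $\partial_pF$ and the mean value theorem in $p$.

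The main obstacle is the continuity of the third derivative, $v\mapsto p(p-1)(p-2)(v_+)^{p-3}$, in the Hölder norm. Its exponent $p-3$ may be barely above $\gamma$, and it is exactly here that $\gamma<\underline p-3$ is used. My approach is the interpolation trick: $(s_+)^{p-3}$ is $C^{0,\gamma'}$ with $\gamma<\gamma'<\underline p-3$. Its differences are therefore small in $C^{0,\gamma'}$-type bounds times $\|v-w\|_{L^\infty}^{\gamma'-\gamma}$, and interpolating between $L^\infty$ smallness and $C^{0,\gamma'}$ boundedness gives smallness in $C^{0,\gamma}$. Uniformity in $\tau$ then follows from the extra decay $e^{-\sigma(p-1)|\tau|}$, which controls the tails. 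On compact $\tau$-ranges the estimate is purely local.
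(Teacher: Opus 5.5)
Your proposal is correct and is essentially the verification the paper has in mind; the paper gives no argument beyond calling the lemma a straightforward calculation. You supply that calculation: the pointwise treatment of $(s_+)^{p-a}(\log s_+)^b$ on each $Q_\tau$, the weight count giving decay like $e^{-(p-\epsilon)\sigma|\tau|}$ with $(p-\epsilon)\sigma>\sigma$, and the interpolation between $L^\infty$-smallness and $C^{0,\gamma'}$-boundedness ($\gamma<\gamma'<\underline p-3$) for the continuity of top-order coefficients such as $(v_+)^{p-3}$, which correctly identifies where $\gamma<\underline p-3$ is used.
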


For sufficiently small $\delta > 0$ such that $|p_{\alpha} - p_{\alpha_k}| < \delta$, we may assume that the interval $J := [p_{\alpha_k}-\delta, p_{\alpha_k}+\delta]$ is compactly contained in $(3,\infty)$, that $\gamma < p_{\alpha_k}-\delta-3$, and that $(p_{\alpha_k}-\delta)\sigma > m$. Define the nonlinearity
\[
g_{p_{\alpha}}(s) := (s_+)^{p_{\alpha}}.
\]
Let $W_{p_{\alpha}}$ be the homoclinic solution given by \eqref{eq:Wp-general}, and define the operator
\begin{equation}\label{eq:F-general-def}
\begin{split}
\mathcal F(p_{\alpha},\eta)
&:= \mathcal P(W_{p_{\alpha}}+\eta) - g_{p_{\alpha}}(W_{p_{\alpha}}+\eta) \\
&= \mathcal P\eta - \bigl[g_{p_{\alpha}}(W_{p_{\alpha}}+\eta) - W_{p_{\alpha}}^{p_{\alpha}}\bigr]
\end{split}
\end{equation}
for $(p_{\alpha},\eta) \in J \times C^{2,\gamma}_\sigma(\mathcal C)$.
Since $W_{p_\alpha}$ is smooth with respect to $p_\alpha$ and satisfies $D_{p_{\alpha}}^j W_{p_{\alpha}}(t) = \mathrm O(e^{-m|t|})$ for every $j \ge 0$, it follows from Lemma~\ref{lem:nemytskii-local} and our choice of $\sigma$ and $\gamma$ that $\mathcal F \colon J \times C^{2,\gamma}_\sigma(\mathcal C) \to C^{0,\gamma}_\sigma(\mathcal C)$ is a $C^3$ map.

Observe that
\begin{equation}\label{eq:trivial}
\mathcal F(p_\alpha,0) = 0,
\end{equation}
and
\begin{equation}\label{eq:Lkdef}
D_\eta \mathcal F(p_{\alpha_k},0) = L_{p_{\alpha_k}}
= \mathcal P - p_{\alpha_k} W_{p_{\alpha_k}}^{p_{\alpha_k}-1}.
\end{equation}
Let
\[
\mathcal I
:=
\left\{
\lambda \in \mathbb R :
\mathcal P(e^{\lambda t}\phi) = 0
\text{ for some } 0 \not\equiv \phi \in C^\infty(\mathbb S^{N-1})
\right\}
\]
denote the set of indicial roots of $\mathcal P$. A direct computation using spherical harmonics shows that
\[
\mathcal I = \{\pm(m+l) : l = 0,1,2,\dots\}.
\]
Since the coefficients of $L_{p_{\alpha_k}}$ converge to those of $\mathcal P$ as $|t| \to \infty$, we may also refer to $\mathcal I$ as the set of indicial roots of $L_{p_{\alpha_k}}$.
By our choice of $\sigma \notin \mathcal I$,
\begin{equation}\label{eq:P-isom}
\begin{split}
\mathcal P :&\; C^{2,\gamma}_\sigma(\mathcal C) \to C^{0,\gamma}_\sigma(\mathcal C) \quad \text{is surjective}, \\
&\|v\|_{C^{2,\gamma}_\sigma(\mathcal C)} \le C \|\mathcal P v\|_{C^{0,\gamma}_\sigma(\mathcal C)}
\quad \text{for all } v \in C^{2,\gamma}_\sigma(\mathcal C),
\end{split}
\end{equation}
where $C>0$ depends only on $N$, $\sigma$, and $\gamma$.
Indeed, this follows from Proposition 6.2.1 and Lemma 12.1.1 of Pacard \cite{P}.
More importantly, 
\[
L_{p_{\alpha_k}} : C^{2,\gamma}_\sigma(\mathcal C) \to C^{0,\gamma}_\sigma(\mathcal C)
\quad \text{is Fredholm}.
\]
It follows from the weighted Sobolev Fredholm theorem of
Lockhart--McOwen \cite[Theorem 8.1]{LM} that
\(L_{p_{\alpha_k}}\) is Fredholm in the corresponding weighted Sobolev spaces,
since \(\sigma\) avoids the indicial roots.  By standard elliptic regularity
and the weighted Schauder estimate \eqref{eq:P-isom}, the same Fredholm
property holds in the weighted H\"older spaces. Indeed, Sobolev solutions with
H\"older right-hand side are upgraded to \(C^{2,\gamma}_\sigma\), and the
adjoint kernel defining the solvability conditions is smooth and has the
corresponding exponential decay.

To apply the classical Crandall--Rabinowitz theorem, we introduce the following closed subspaces with restricted symmetries:
\begin{align*}
X := \bigl\{ v \in C^{2,\gamma}_\sigma(\mathcal C) : \;
&v(t,\theta) = v(-t,\theta), \\
&v(t,A\theta)=v(t,\theta)\ \text{for all }A\in O(N-1)\times O(1) \bigr\},
\end{align*}
and
\begin{align*}
Y := \bigl\{ v \in C^{0,\gamma}_\sigma(\mathcal C) : \;
&v(t,\theta) = v(-t,\theta), \\
&v(t,A\theta)=v(t,\theta)\ \text{for all }A\in O(N-1)\times O(1) \bigr\},
\end{align*}
where \(O(N-1)\) acts on the first \(N-1\) variables and \(O(1)\) acts on the last variable \(\theta_N\). These are closed subspaces of $C^{2,\gamma}_\sigma(\mathcal C)$ and $C^{0,\gamma}_\sigma(\mathcal C)$, respectively. It is evident that $\mathcal F$ maps $(p_{\alpha_k}-\delta, p_{\alpha_k}+\delta) \times X$ into $Y$. Henceforth, we restrict our attention to this setting.

Since $L_{p_{\alpha_k}}$ commutes with the reflection $t \mapsto -t$ and with the $O(N-1)$-action on the sphere, the symmetry spaces $X$ and $Y$ are invariant under $L_{p_{\alpha_k}}$. Since these symmetry spaces are closed subspaces obtained by bounded projections, the restriction
\begin{equation}\label{eq:Fredholm-general}
L_{p_{\alpha_k}} : X \to Y \quad \text{is also Fredholm}.
\end{equation}

\begin{lemma}\label{Find}
The Fredholm index of the operator $L_{p_{\alpha_k}} \colon X \to Y$ is zero.
\end{lemma}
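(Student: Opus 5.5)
We will show that the index vanishes by identifying the cokernel of $L_{p_{\alpha_k}}\colon X\to Y$ with the kernel of the same operator on the dual weight. The idea is that $L_{p_{\alpha_k}}$ is formally self-adjoint with respect to the $L^2(\mathcal C)$ pairing. Moreover, the weight $\sigma$ in \eqref{eq:sigma-general} lies in the interval $(0,m)$, and this interval contains no indicial roots, since the indicial roots of smallest modulus are $\pm m$.

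\textbf{Step 1 (independence of the weight).} We first observe that the index does not depend on $\sigma$ as it ranges over $(-m,m)$. By the Lockhart--McOwen theory \cite{LM}, the index of $L_{p_{\alpha_k}}$ in weighted spaces changes only when the weight crosses an indicial root. Since $(-m,m)\cap\mathcal I=\emptyset$, the index is constant for all $\sigma\in(-m,m)$. The symmetry restriction to $X\to Y$ is compatible with this: the relevant projections (averaging over the compact group $O(N-1)\times O(1)$ and over $t\mapsto -t$) commute with $L_{p_{\alpha_k}}$ and preserve each weighted space. Hence the same constancy holds for the restricted operator, and in particular the index at $\sigma$ equals the index at $-\sigma$.

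\textbf{Step 2 (duality).} Next we use the $L^2(\mathcal C)$ pairing, restricted to the symmetric functions, which puts $Y_\sigma$ in duality with functions in $C^{0,\gamma}_{-\sigma}$. Since $L_{p_{\alpha_k}}$ is symmetric, the range of $L_{p_{\alpha_k}}\colon X_\sigma\to Y_\sigma$ is exactly the annihilator of the kernel of $L_{p_{\alpha_k}}$ on $X_{-\sigma}$; this is the usual characterization of closed Fredholm ranges via the formal adjoint, as in \cite[Theorem 8.1]{LM}. This gives
\[
\operatorname{ind}_\sigma=\dim\ker_\sigma-\dim\ker_{-\sigma}=-\operatorname{ind}_{-\sigma},
\]
and combining with Step 1 yields $\operatorname{ind}_\sigma=0$.

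\textbf{Step 3 (direct check of the kernels).} As a check, and to avoid relying on the abstract wall-crossing statement, I would also verify the conclusion by computing both kernels through the separation of variables used in Theorem~\ref{thm:kernel}. Any element of either kernel decomposes into modes $f_l(t)Y_l(\theta)$, and each $f_l$ solves \eqref{eq:modeq}. A solution of \eqref{eq:modeq} with growth at most $e^{\sigma|t|}$, where $\sigma<m\le m+l$, must in fact decay like $e^{-(m+l)|t|}$, because the only other asymptotic behavior is growth like $e^{(m+l)|t|}$. It therefore lies in $L^2(\mathbb R)$, and Lemma~\ref{lem:PT-spectrum} applies.

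Consequently, both kernels equal the symmetric part of the kernel in Theorem~\ref{thm:kernel}. The mode $W'_{p_{\alpha_k}}$ is odd in $t$ and is excluded by the evenness condition in $X$. The surviving mode is $\varphi_k(t)Y(\theta)$ with $Y$ the unique $O(N-1)$-invariant spherical harmonic of degree $k$, the zonal harmonic. Because $k$ is even, this harmonic is also invariant under $\theta_N\mapsto-\theta_N$, so the kernel is one-dimensional in both $X_\sigma$ and $X_{-\sigma}$. Thus the cokernel has dimension $\dim\ker_{-\sigma}=1$, matching $\dim\ker_\sigma=1$, and the index is $0$.

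The main obstacle is Step 2: justifying, in weighted Hölder spaces rather than in weighted Sobolev spaces, that the range is exactly the annihilator of the $(-\sigma)$-kernel. I would handle this in three moves. First, pass to the weighted $L^2$ spaces, where \cite{LM} provides this characterization. Second, use the Schauder estimate \eqref{eq:P-isom} together with local elliptic regularity to upgrade Sobolev solutions with Hölder right-hand side to $C^{2,\gamma}_\sigma$. Third, note that the decaying kernel elements are smooth, so the solvability condition takes the same form in both settings.
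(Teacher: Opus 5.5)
Your proposal is correct, but it follows a different route from the paper's.

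\textbf{The paper's route.} It uses neither duality nor self-adjointness. It deforms the potential, $L_{p_{\alpha_k},s}=\mathcal P-s\,p_{\alpha_k}W_{p_{\alpha_k}}^{p_{\alpha_k}-1}$ for $s\in[0,1]$. Every member of this family is Fredholm from $X$ to $Y$, because the indicial roots do not depend on $s$ and $\sigma\notin\mathcal I$. Homotopy invariance of the index then gives $\operatorname{ind}L_{p_{\alpha_k}}=\operatorname{ind}\mathcal P=0$, where the last equality is the isomorphism statement \eqref{eq:P-isom}.

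\textbf{Your route.} You exploit formal self-adjointness and the fact that the root-free window $(-m,m)$ is symmetric about $0$. You identify the cokernel at weight $\sigma$ with the kernel at weight $-\sigma$. You then close the argument either by wall-crossing constancy (Step 1) or by the explicit mode analysis (Step 3).

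\textbf{Comparison.} The paper's argument is shorter. It needs only the Fredholm property along the path and invertibility of the model operator, with no analysis at the growing weight. Yours requires the adjoint characterization of the range in weighted H\"older spaces, but it yields more. Step 3 identifies the cokernel explicitly as $\operatorname{span}\{\Phi_k\}$, so Lemma~\ref{lem:codim} comes out at the same time. The paper instead obtains that lemma afterwards, from the index, Lemma~\ref{KerLk}, and the integration-by-parts inclusion.

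\textbf{One correction in Step 2.} The $L^2$ pairing between $C^{0,\gamma}_\sigma$ and $C^{0,\gamma}_{-\sigma}$ does not converge in general, since the product is only bounded. So the duality has to be taken in weighted $L^2$ spaces with a slightly shifted weight $\sigma'<\sigma$. This is harmless for two reasons: $(0,m)$ contains no indicial roots, and the $(-\sigma)$-kernel elements actually decay like $e^{-(m+l)|t|}$ by your Step 3. The upgrade back to decay rate $\sigma$ follows by applying \eqref{eq:P-isom} to $\mathcal P u=f+p_{\alpha_k}W_{p_{\alpha_k}}^{p_{\alpha_k}-1}u$, together with injectivity of $\mathcal P$ at the weaker weight.
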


\begin{proof}
For $s \in [0,1]$, define the continuous family of operators
\[
L_{p_{\alpha_k},s} := \mathcal P - s \, p_{\alpha_k} W_{p_{\alpha_k}}^{p_{\alpha_k}-1}.
\]
As $|t| \to \infty$, the coefficients of $L_{p_{\alpha_k},s}$ converge uniformly to those of $\mathcal P$; hence they share the same indicial roots. Since $\sigma \notin \mathcal I$, it follows from Theorem~8.1 of Lockhart--McOwen \cite{LM} that
\[
L_{p_{\alpha_k},s} \colon X \to Y, \qquad 0 \le s \le 1,
\]
is a Fredholm operator.

The Fredholm index is invariant under continuous deformations within the family of Fredholm operators; see, for example, Conway \cite[Chapter~XI, Sections~2--3]{Conway} or Kato \cite[Chapter~IV]{Kato}. Therefore,
\[
\operatorname{ind}(L_{p_{\alpha_k}})
=
\operatorname{ind}(L_{p_{\alpha_k},1})
=
\operatorname{ind}(L_{p_{\alpha_k},0})
=
\operatorname{ind}(\mathcal P).
\]
By \eqref{eq:P-isom}, we have $\operatorname{ind}(\mathcal P) = 0$, which completes the proof.
\end{proof}

\begin{lemma}\label{KerLk}
The kernel of $L_{p_{\alpha_k}} \colon X \to Y$ is one-dimensional.
\end{lemma}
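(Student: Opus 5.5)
Any $v\in\ker(L_{p_{\alpha_k}}\colon X\to Y)$ should be forced into the kernel described in Theorem~\ref{thm:kernel}, and the symmetry constraints defining $X$ then cut that kernel down to a single line. First I will check that $v$ lies in $H^1(\mathcal C)$. Since $v\in C^{2,\gamma}_\sigma(\mathcal C)$ with $\sigma>0$, the functions $v$, $\partial_t v$ and $\nabla_{\mathbb S^{N-1}}v$ are bounded by $Ce^{-\sigma|t|}$, so they are square integrable on $\mathcal C$. Therefore $v\in H^1(\mathcal C)$, and $L_{p_{\alpha_k}}v=0$ holds classically, hence also in the distribution sense. Since $q_{\alpha_k}=k\in\mathbb N$, Theorem~\ref{thm:kernel} gives
\[
v = c\,W_{p_{\alpha_k}}'(t) + \varphi_k(t)Y(\theta),\qquad Y\in\mathcal Y_k .
\]

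Next I will use the symmetries of $X$. The profile $W_{p_{\alpha_k}}$ is even in $t$, so $W'_{p_{\alpha_k}}$ is odd, while $\varphi_k$ is even. The condition $v(t,\theta)=v(-t,\theta)$ then gives $2cW'_{p_{\alpha_k}}(t)=0$, so $c=0$. It remains to describe the spherical harmonics of degree $k$ that are invariant under $O(N-1)\times O(1)$. Invariance under $O(N-1)$ acting on $(\theta_1,\dots,\theta_{N-1})$ makes $Y$ zonal about the $\theta_N$-axis. It is classical that the zonal harmonics of degree $k$ form the one-dimensional space spanned by the Gegenbauer polynomial $P_k(\theta_N)=C_k^{(N-2)/2}(\theta_N)$; for $N=3$ this is the Legendre polynomial.

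To make the zonal step self-contained, I will write $Y=f(\theta_N)$ and observe that $-\Delta_{\mathbb S^{N-1}}Y=k(k+2m)Y$ becomes the Gegenbauer ODE on $(-1,1)$. This ODE has a one-dimensional space of solutions that are regular at $\theta_N=\pm1$, spanned by $C_k^{(N-2)/2}$. The other solution is singular at the poles when $N\ge3$ (logarithmically when $N=3$), so it cannot give a smooth function on the sphere.

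Finally, $P_k$ has parity $(-1)^k$, so it is invariant under the $O(1)$ reflection $\theta_N\mapsto-\theta_N$ exactly because $k$ is even, which is the hypothesis on $k$. Hence
\[
\ker\bigl(L_{p_{\alpha_k}}\colon X\to Y\bigr)=\operatorname{span}\{\varphi_k(t)P_k(\theta_N)\}.
\]
This function does lie in $X$: it is $C^\infty$, it is even in $t$ and invariant under the group, and $\varphi_k=\sech^{(m+k)/k}(kt)$ decays like $e^{-(m+k)|t|}$, which is faster than $e^{-\sigma|t|}$ since $\sigma<m$. The only step needing care is the uniqueness of the invariant harmonic, which I handle by the regularity argument for the ODE above. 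No other real obstacle is expected.
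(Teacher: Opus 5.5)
Your proposal is correct and follows the same route as the paper. Both apply Theorem~\ref{thm:kernel}, discard the $W_{p_{\alpha_k}}'$ component by evenness in $t$, and use that the $O(N-1)$-invariant degree-$k$ harmonics are spanned by the zonal Gegenbauer polynomial $C_k^{(m)}(\theta_N)$, which is invariant under $\theta_N\mapsto-\theta_N$ because $k$ is even. You also fill in details the paper leaves implicit: that $v\in H^1(\mathcal C)$ so Theorem~\ref{thm:kernel} applies, an ODE-regularity proof of uniqueness of zonal harmonics in place of the Stein--Weiss citation, and that $\Phi_k$ actually lies in $X$.
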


\begin{proof}
According to Theorem~\ref{thm:kernel}, the kernel is given by
\[
\ker(L_{p_{\alpha_k}} \colon X \to Y)
=
X \cap \left(
\operatorname{span}\{W_{p_{\alpha}}'(t)\}
\oplus
\left\{ \varphi_k(t)Y(\theta) : Y \in \mathcal Y_k \right\}
\right).
\]
We analyze the two components of this direct sum separately. First, $W_{p_{\alpha}}'(t) \notin X$ because it is not even in $t$. Second, the $O(N-1)$-invariant spherical harmonics of degree $k$ are precisely the zonal harmonics, which form a one-dimensional space; see Stein--Weiss \cite[ p.~149]{SW71}. Let $Y_k(\theta_N)$ be a nontrivial zonal harmonic of degree $k$. Then
\begin{equation}\label{eq:real-ker}
\Phi_k(t,\theta_N)=\varphi_k(t)Y_k(\theta_N)
=
\sech^{\frac{m+k}{k}}(kt) Y_k(\theta_N).
\end{equation}
In fact, we may choose
\begin{equation}
Y_k(\theta_N)=C_k^{(m)}(\theta_N),
\end{equation}
where $C_j^{(m)}$ denotes the Gegenbauer polynomial of degree $j$ with parameter $m=(N-2)/2$. Since \(k\) is even, this zonal harmonic is also invariant under \(\theta_N\mapsto-\theta_N\).

Consequently, the intersection with $X$ is one-dimensional, and
\[
\ker(L_{p_{\alpha_k}} \colon X \to Y) = \operatorname{span}\{\Phi_k(t,\theta_N)\}.
\]
The lemma is proved.
\end{proof}

\begin{lemma}\label{lem:cubicmoment}
Let $N\ge 3$ and let $k\ge 2$ be even. For $Y_k(\theta_N)=C_k^{(m)}(\theta_N)$,
\begin{equation}\label{eq:angular-positive}
\int_{\mathbb S^{N-1}} Y_k^3 \, d\theta > 0.
\end{equation}
\end{lemma}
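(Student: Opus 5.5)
The plan is to reduce the spherical integral to a one-dimensional integral against the Gegenbauer weight. Then we use the classical linearization (Dougall/Rogers) formula for products of Gegenbauer polynomials, whose coefficients are all positive. Finally, orthogonality isolates a single positive term.

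\emph{Step 1: reduction to one variable.} Since the integrand depends only on $\theta_N$, the standard slicing formula on $\mathbb S^{N-1}$ gives
\[
\int_{\mathbb S^{N-1}} Y_k^3\,d\theta=|\mathbb S^{N-2}|\int_{-1}^1 \bigl(C_k^{(m)}(s)\bigr)^3 (1-s^2)^{\frac{N-3}{2}}\,ds .
\]
Here $\frac{N-3}{2}=m-\frac12$, so the weight is exactly the orthogonality weight $w_m(s)=(1-s^2)^{m-1/2}$ for $\{C_j^{(m)}\}_{j\ge0}$. This includes the case $N=3$, where $m=\frac12$ and we get Legendre polynomials with constant weight.

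\emph{Step 2: linearization.} Next we invoke the Dougall--Rogers formula, valid for $\lambda>0$:
\[
C_a^{(\lambda)}C_b^{(\lambda)}=\sum_{j=0}^{\min(a,b)} c_j\, C_{a+b-2j}^{(\lambda)},
\]
with
\[
c_j=\frac{(a+b+\lambda-2j)\,(\lambda)_j(\lambda)_{a-j}(\lambda)_{b-j}(2\lambda)_{a+b-j}\,(a+b-2j)!}{(a+b+\lambda-j)\,j!(a-j)!(b-j)!\,(\lambda)_{a+b-j}\,(2\lambda)_{a+b-2j}} .
\]
Every factor is a Pochhammer symbol with positive base or a positive integer, so $c_j>0$ for $\lambda=m>0$. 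We take $a=b=k$ and $\lambda=m$.

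\emph{Step 3: orthogonality.} Multiplying by $C_k^{(m)}w_m$ and integrating, orthogonality annihilates every term except the one with $2k-2j=k$, i.e.\ $j=k/2$. This index is an integer precisely because $k$ is even, and it satisfies $0\le k/2\le k$. Hence
\[
\int_{-1}^1 (C_k^{(m)})^3 w_m\,ds=c_{k/2}\int_{-1}^1 (C_k^{(m)})^2 w_m\,ds>0,
\]
which gives \eqref{eq:angular-positive}.

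The only step that requires care is Step 2, namely quoting the linearization coefficients correctly and checking that they are positive. If one prefers to avoid the explicit formula, an alternative is to observe that zonal harmonics satisfy $Y_k^2=\sum_{l\le 2k,\ l\text{ even}} a_l Y_l$ with $a_l\ge 0$ (Clebsch--Gordan nonnegativity). This still requires the coefficient $a_k$ to be strictly positive, which is exactly what Dougall's formula provides. The remaining steps are routine.
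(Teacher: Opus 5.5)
Your proof is correct and follows the same route as the paper. You reduce to the one-variable integral against the Gegenbauer weight $(1-s^2)^{m-1/2}$, expand $(C_k^{(m)})^2$ via a linearization formula with positive coefficients, and use orthogonality to isolate the $j=k/2$ term. The only difference is that you quote the explicit Dougall--Rogers coefficients (stated correctly, and visibly positive for $m>0$), whereas the paper appeals to Gasper's more general positivity theorem for Jacobi linearization coefficients; in the Gegenbauer case your citation is the more direct one.
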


\begin{proof}
For any function $F$ depending only on $x=\theta_N$,
\begin{equation}\label{eq:sphintegration}
\int_{\mathbb S^{N-1}} F(\theta_N)\,d\theta
=
|\mathbb S^{N-2}| \int_{-1}^1 F(x)(1-x^2)^{m-\frac12}\,dx.
\end{equation}
The Gegenbauer polynomials are orthogonal with respect to this weight. By the Gegenbauer linearization formula and Gasper's positivity theorem for Jacobi polynomial linearization coefficients \cite[Theorem 1]{Gasper}; see also \cite[Section 6.8]{AAR},
\begin{equation}\label{eq:linearization}
C_k^{(m)}(x)C_k^{(m)}(x)=\sum_{j=0}^k b_j C_{2k-2j}^{(m)}(x),\qquad x\in[-1,1],
\end{equation}
with $b_j>0$ for all indices allowed by parity. Since $k$ is even, the term $C_k^{(m)}$ occurs in \eqref{eq:linearization}, namely for $j=k/2$, with strictly positive coefficient. Multiplying by $C_k^{(m)}$ and using orthogonality leaves only this term. The lemma is proved.
\end{proof}

\begin{lemma} \label{lem:codim}
\[
\operatorname{Range} L_{p_{\alpha_k}} := L_{p_{\alpha_k}}(X)
=
\left\{ f \in Y : \int_{\mathcal C} f \Phi_k \, dt \, d\theta = 0 \right\}.
\]
\end{lemma}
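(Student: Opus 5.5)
We prove the two inclusions. The range is closed and has codimension one, so it suffices to show that it lies inside the hyperplane
\[
Z:=\Bigl\{ f\in Y : \int_{\mathcal C} f\Phi_k\,dt\,d\theta=0\Bigr\},
\]
and that $Z$ is a proper closed subspace of $Y$.

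\emph{Step 1: the range is contained in $Z$.} Let $f=L_{p_{\alpha_k}}v$ with $v\in X$. Since $\Phi_k$ and $\partial_t\Phi_k$ decay like $e^{-(m+k)|t|}$, while $v$ and $\nabla v$ are $O(e^{-\sigma|t|})$, the integral $\int f\Phi_k$ converges absolutely. We integrate by parts on the truncated cylinder $[-T,T]\times\mathbb S^{N-1}$. The operator $L_{p_{\alpha_k}}$ is formally self-adjoint, since the potential is a real function of $t$ and $\Delta_{\mathbb S^{N-1}}$ is symmetric. This gives
\[
\int f\Phi_k=\int v\,L_{p_{\alpha_k}}\Phi_k+\lim_{T\to\infty}(\text{boundary terms at }t=\pm T).
\]
The boundary terms are $O(e^{-(\sigma+m+k)T})\to0$. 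By Theorem~\ref{thm:kernel} we have $L_{p_{\alpha_k}}\Phi_k=0$, so $\int f\Phi_k=0$.

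\emph{Step 2: $Z$ has codimension exactly one.} The functional $\ell(f)=\int f\Phi_k$ is bounded on $Y$, because $|f|\le\|f\|e^{-\sigma|t|}$ and $\Phi_k$ is integrable. It is also nonzero on $Y$, since $\ell(\Phi_k)=\int\Phi_k^2>0$ and $\Phi_k\in Y$. Indeed, $\Phi_k$ is even in $t$, $O(N-1)\times O(1)$-invariant because $k$ is even, and decays at rate $m+k>\sigma$. Hence $Z=\ker\ell$ is a closed hyperplane in $Y$.

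\emph{Step 3: conclusion by the index count.} By Lemma~\ref{Find} and Lemma~\ref{KerLk}, $\operatorname{ind} L_{p_{\alpha_k}}=0$ and $\dim\ker=1$. So the range is closed, which holds because the operator is Fredholm by \eqref{eq:Fredholm-general}, and it has codimension $1$ in $Y$. A closed subspace of codimension one contained in a hyperplane $Z$ of codimension one must equal $Z$. This proves the lemma.

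\emph{Main obstacle.} The only delicate point is justifying the integration by parts with decay rates. We need $v\in C^{2,\gamma}_\sigma$ to give control of $v$ and $\partial_tv$ at rate $e^{-\sigma|t|}$, which the weighted norm \eqref{vnorm} provides directly, and we need $\sigma+m+k>0$, which is trivial. If one wanted to avoid relying on the index, one could instead solve $L_{p_{\alpha_k}}v=f$ for $f\in Z$ mode by mode, using the P\"oschl--Teller ODE and variation of parameters. This is messier, so we rely on Lemma~\ref{Find}.
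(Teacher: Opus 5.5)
Your proposal is correct and follows essentially the same route as the paper. You integrate by parts on a truncated cylinder, where the boundary terms are of order $e^{-(m+k+\sigma)R}$, to get $L_{p_{\alpha_k}}(X)\subset\ker\ell$; you show $\ell$ is nontrivial via $\ell(\Phi_k)>0$; and you use the index count from Lemmas~\ref{Find} and~\ref{KerLk} to conclude that the two codimension-one subspaces coincide. Your explicit check that $\Phi_k\in Y$ (evenness in $t$, invariance for even $k$, decay rate $m+k>\sigma$) is a detail the paper leaves implicit.
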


\begin{proof}
For $f \in Y$, we have $f = O(e^{-\sigma |t|})$ and $\Phi_k = O(e^{-(m+k)|t|})$; hence the integral is well-defined.
Moreover, the linear functional
\[
\ell(f) := \int_{\mathcal C} f \Phi_k \, dt \, d\theta
\]
is nontrivial on $Y$, since $\ell(\Phi_k)>0$. Thus $\ker \ell$ is a closed subspace of $Y$ of codimension one.

We first show that
\[
L_{p_{\alpha_k}}(X) \subset \ker \ell.
\]
Let $u \in X$. Using the self-adjointness of $L_{p_{\alpha_k}}$ together with $L_{p_{\alpha_k}}\Phi_k = 0$, integration by parts on $(-R,R) \times \mathbb S^{N-1}$ gives
\begin{equation}
\label{locint}
\int_{(-R,R)\times \mathbb S^{N-1}}
(L_{p_{\alpha_k}}u)\Phi_k \, dt \, d\theta
=
\int_{(-R,R)\times \mathbb S^{N-1}}
u (L_{p_{\alpha_k}}\Phi_k) \, dt \, d\theta
+ B_R
=
B_R,
\end{equation}
where
\[
B_R
=
\int_{\mathbb S^{N-1}}
\left[
-u_t \Phi_k + u (\Phi_k)_t
\right]_{t=-R}^{t=R}
d\theta.
\]
Moreover, we have
\[
|u| + |u_t| = O(e^{-\sigma |t|}),
\qquad
|\Phi_k| + |(\Phi_k)_t| = O(e^{-(m+k)|t|}).
\]
Therefore
\[
|B_R| \le C e^{-(m+k+\sigma)R} \to 0
\qquad \text{as } R \to \infty.
\]
Letting $R \to \infty$ in \eqref{locint}, we obtain
\[
\int_{\mathcal C} (L_{p_{\alpha_k}}u)\Phi_k \, dt \, d\theta = 0,
\]
hence
\[
L_{p_{\alpha_k}}(X) \subset \ker \ell.
\]

On the other hand, it follows from Lemmas \ref{Find} and \ref{KerLk} that
\[
\operatorname{codim} \operatorname{Range} L_{p_{\alpha_k}}
=
\dim \ker L_{p_{\alpha_k}}
-
\operatorname{ind} L_{p_{\alpha_k}}
=
1.
\]
Therefore $\operatorname{Range} L_{p_{\alpha_k}}$ is a codimension-one subspace of $Y$. Since it is contained in the codimension-one subspace $\ker \ell$, the two spaces must be equal.
\end{proof}

\begin{proposition}\label{prop:localbif}
There exist $\varepsilon > 0$, a $C^2$ curve $s \mapsto (p(s),\eta(s)) \in \mathbb R \times X$ for $s \in (-\varepsilon,\varepsilon)$, and a closed complement $Z$ of $\operatorname{span}\{\Phi_k\}$ in $X$ such that
\begin{equation}\label{eq:CRcurve}
p(0) = p_{\alpha_k}, \quad \eta(0) = 0, \quad \eta(s) = s\Phi_k + s\psi(s), \quad \psi(s) \in Z, \quad \psi(0) = 0,
\end{equation}
and
\begin{equation}\label{eq:Fzero-branch}
\mathcal F(p(s),\eta(s)) = 0.
\end{equation}
The local nontrivial solutions of \eqref{eq:Fzero-branch} near $(p_{\alpha_k},0)$ are exhausted by this curve together with the trivial curve $\eta = 0$.
\end{proposition}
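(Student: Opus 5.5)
We will apply the Crandall--Rabinowitz theorem on bifurcation from a simple eigenvalue to the map \(\mathcal F\colon J\times X\to Y\). By the preceding discussion, \(\mathcal F\) is \(C^3\), it maps \(J\times X\) into \(Y\), and by \eqref{eq:trivial} it vanishes on the trivial line \(\eta=0\). It therefore suffices to check the three hypotheses of the theorem at \((p_{\alpha_k},0)\).

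\emph{Hypotheses (i) and (ii): kernel and range.} By \eqref{eq:Lkdef}, Lemma~\ref{KerLk} and Lemma~\ref{lem:codim},
\[
\ker D_\eta\mathcal F(p_{\alpha_k},0)=\operatorname{span}\{\Phi_k\},
\]
and the range is the closed subspace \(\ker\ell\), which has codimension one in \(Y\). For the complement \(Z\) we take
\[
Z:=\Bigl\{v\in X:\int_{\mathcal C}v\,\Phi_k\,dt\,d\theta=0\Bigr\}.
\]
This set is closed in \(X\), because \(\Phi_k\) decays like \(e^{-(m+k)|t|}\) and the pairing is continuous on \(C^{2,\gamma}_\sigma\). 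Since \(\ell(\Phi_k)>0\), we have \(X=\operatorname{span}\{\Phi_k\}\oplus Z\).

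\emph{Hypothesis (iii): transversality.} We must show \(D^2_{p\eta}\mathcal F(p_{\alpha_k},0)[\Phi_k]\notin\operatorname{Range}L_{p_{\alpha_k}}\). Differentiating \(D_\eta\mathcal F(p,0)=\mathcal P-pW_p^{p-1}\) in \(p\) gives
\[
D^2_{p\eta}\mathcal F(p_{\alpha_k},0)[\Phi_k]=-\partial_p\bigl(pW_p^{p-1}\bigr)\big|_{p=p_{\alpha_k}}\Phi_k .
\]
The factor \(\partial_p(pW_p^{p-1})\) is a function of \(t\) alone. Hence the angular part of \(\ell\) of this element is \(\int Y_k^2\,d\theta>0\). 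It remains to show that
\[
\int_{\mathbb R}\partial_p\bigl(pW_p^{p-1}\bigr)\varphi_k^2\,dt\neq0 .
\]
By \eqref{lopt}, \(pW_p^{p-1}=(m+q)(m+2q)\sech^2(qt)\), and we may parametrize by \(q\) since \(q\) is a smooth monotone function of \(p\) via \(p=1+2q/m\). This means computing the \(q\)-derivative of \(V_q(t):=(m+q)(m+2q)\sech^2(qt)\) at \(q=k\) and pairing it against \(\varphi_k^2=\sech^{2(m+k)/k}(kt)\).

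We plan to avoid explicit Beta-function integrals by a Hellmann--Feynman argument. Consider the \(l=k\) mode operator \(-\partial_t^2+(m+k)^2-V_q\). Its lowest eigenvalue in \(L^2(\mathbb R)\) is
\[
\mu(q)=(m+k)^2-q^2\nu(q)^2=(m+k)^2-(m+q)^2,
\]
because Lemma~\ref{lem:PT-spectrum} gives the ground energy \(-\nu^2\) in the variable \(\tau=qt\) with \(\nu=(m+q)/q\). The eigenvalue is simple, with eigenfunction \(\sech^{\nu}(qt)\), which equals \(\varphi_k\) at \(q=k\). First-order perturbation theory then gives
\[
\mu'(k)\int_{\mathbb R}\varphi_k^2\,dt=-\int_{\mathbb R}\partial_qV_q\big|_{q=k}\,\varphi_k^2\,dt .
\]
Since \(\mu'(k)=-2(m+k)\neq0\), the pairing is nonzero and transversality holds.

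\emph{Conclusion.} With (i)--(iii) verified, Crandall--Rabinowitz yields a curve \(s\mapsto(p(s),s\Phi_k+s\psi(s))\) with \(\psi(s)\in Z\) and \(\psi(0)=0\). It also gives local uniqueness: near \((p_{\alpha_k},0)\), the zero set consists of this curve and the trivial line. The curve has the same regularity as \(\mathcal F\) minus one derivative, so it is \(C^2\) because \(\mathcal F\) is \(C^3\).

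The main obstacle is the transversality step, specifically justifying the Hellmann--Feynman identity. This requires smooth dependence of the simple eigenpair on \(q\), which holds here because the family is explicit, and control of the decay of the differentiated potential, which is immediate. If that route is awkward, the fallback is to compute \(\int\partial_qV_q\,\sech^{2\nu}\) directly using Beta integrals.
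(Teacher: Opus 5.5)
Your proposal is correct. It follows the same framework as the paper: Crandall--Rabinowitz for $\mathcal F\colon J\times X\to Y$, with the kernel taken from Lemma~\ref{KerLk} and the range from Lemma~\ref{lem:codim}. Transversality then reduces to $\ell(\partial_p L_p\Phi_k)\neq 0$, which factors into $\int_{\mathbb S^{N-1}}Y_k^2\,d\theta$ times a radial integral. Your explicit $Z$, the $L^2$-orthogonal complement of $\Phi_k$ in $X$, is a legitimate closed complement.

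The genuine difference is how you evaluate the radial integral.

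\emph{The paper's route.} It works in the $p$-parametrization and differentiates the amplitude and the scale of $\frac{m^2p(p+1)}{2}\sech^2(\frac{m(p-1)}{2}t)$ separately. It then integrates the $t\sech^Q(kt)\tanh(kt)$ term by parts, obtaining $\frac{m(2m+3k)}{2}\int_{\mathbb R}\sech^{Q}(kt)\,dt$ with $Q=2(m+2k)/k$.

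\emph{Your route.} You read the same quantity as a Hellmann--Feynman derivative. Differentiate the explicit identity $(-\partial_t^2-V_q+(m+q)^2)\sech^{(m+q)/q}(qt)=0$ in $q$ and pair with the eigenfunction. The term containing $\partial_q\phi_q$ vanishes by symmetry of the operator, since $\partial_q\phi_q$ is a polynomial in $t$ times an exponentially decaying function. This gives $\int_{\mathbb R}\partial_qV_q\big|_{q=k}\,\varphi_k^2\,dt=2(m+k)\int_{\mathbb R}\varphi_k^2\,dt$. You do not actually need simplicity of the eigenvalue here, because the eigenpair is an explicit smooth family.

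\emph{Consistency check.} Since $dq/dp=m/2$, your value becomes $m(m+k)\int\varphi_k^2\,dt$ in the $p$-parametrization. This agrees with the paper's constant via the reduction $\int\sech^{a+2}=\frac{a}{a+1}\int\sech^{a}$ with $a=2(m+k)/k$.

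\emph{What each route buys.} Yours is more conceptual. It identifies the transversality quantity as the speed at which the ground energy $(m+k)^2-(m+q)^2$ of the degree-$k$ mode crosses zero, and it reuses Lemma~\ref{lem:PT-spectrum}. It also gives more than you claimed: the sign $D_k=-m(m+k)\|\varphi_k\|_{L^2(\mathbb R)}^2\|Y_k\|_{L^2(\mathbb S^{N-1})}^2<0$, which is needed later in Proposition~\ref{prop:non-vert}, not merely $D_k\neq 0$. The paper's direct computation is more elementary, and it produces the explicit constant $A_k$ that is quoted again in Lemma~\ref{lem:simple-crossing-global}(iii).
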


\begin{proof}
To apply the Crandall--Rabinowitz theorem \cite{CR}, it remains to verify the transversality condition. Recall
\begin{equation*}
L_{p_{\alpha}} = D_\eta \mathcal F(p_{\alpha},0) = \mathcal P - p_{\alpha} W_{p_{\alpha}}^{p_{\alpha}-1}.
\end{equation*}
From \eqref{eq:Wp-general}, we have
\begin{equation}
\label{palphafm}
p_{\alpha} W_{p_{\alpha}}^{p_{\alpha}-1}
=
\frac{m^2 p_{\alpha}(p_{\alpha}+1)}{2}
\sech^2\left(\frac{m(p_{\alpha}-1)}{2} t\right).
\end{equation}
We first compute the sign of the transversality quantity
\[
D_k
:=
\left\langle
\partial_{p_{\alpha}} L_{p_{\alpha}}\big|_{p_{\alpha}=p_{\alpha_k}}
\Phi_k,
\Phi_k
\right\rangle_{L^2(\mathcal C)}.
\]
We claim that
\begin{equation}\label{eq:Dknegative-short}
D_k < 0.
\end{equation}
To prove this, set
\[
A(p_{\alpha}) = \frac{m^2 p_{\alpha}(p_{\alpha}+1)}{2}, \qquad b(p_{\alpha}) = \frac{m(p_{\alpha}-1)}{2}.
\]
Then $b(p_{\alpha_k}) = k$ and $A(p_{\alpha_k}) = (m+k)(m+2k)$. Hence, by \eqref{palphafm},
\[
\partial_{p_{\alpha}} \big(p_{\alpha} W_{p_{\alpha}}^{p_{\alpha}-1}\big)\big|_{p_{\alpha}=p_{\alpha_k}}
=
A'(p_{\alpha_k})\sech^2(kt) - m A(p_{\alpha_k}) t \sech^2(kt)\tanh(kt).
\]
Set $Q = 2(m+2k)/k$. Then $\varphi_k^2 \sech^2(kt) = \sech^Q(kt)$ and
\[
\frac{d}{dt} \sech^Q(kt) = -Qk \sech^Q(kt)\tanh(kt),
\]
so integration by parts yields
\[
\int_{\mathbb R} t \sech^Q(kt)\tanh(kt)\,dt
=
\frac{1}{Qk} \int_{\mathbb R} \sech^Q(kt)\,dt > 0.
\]
Since \(p_{\alpha_k}=1+\frac{2k}{m}\), we have
\[
A'(p_{\alpha_k})
=\frac{m^2}{2}\left(3+\frac{4k}{m}\right)
=\frac{m(3m+4k)}{2}.
\]
On the other hand, by the definition of \(Q\),
\[
\frac{mA(p_{\alpha_k})}{Qk}
=
\frac{m(m+k)(m+2k)}{2(m+2k)}
=
\frac{m(m+k)}{2},
\]
and
\[
A'(p_{\alpha_k})-\frac{mA(p_{\alpha_k})}{Qk}
=
\frac{m(2m+3k)}{2}.
\]
Therefore,
\begin{align*}
\int_{\mathbb R}
\partial_{p_{\alpha}} \big(p_{\alpha} W_{p_{\alpha}}^{p_{\alpha}-1}\big)\big|_{p_{\alpha}=p_{\alpha_k}}
\varphi_k^2 \, dt
&=
\left(A'(p_{\alpha_k}) - \frac{m A(p_{\alpha_k})}{Qk}\right)
\int_{\mathbb R} \sech^Q(kt)\,dt \\
&=
\frac{m(2m+3k)}{2}
\int_{\mathbb R} \sech^Q(kt)\,dt > 0.
\end{align*}
Combining $\partial_{p_{\alpha}} L_{p_{\alpha}} = -\partial_{p_{\alpha}} \big(p_{\alpha} W_{p_{\alpha}}^{p_{\alpha}-1}\big)$ with \eqref{eq:real-ker} establishes \eqref{eq:Dknegative-short}. Finally, by Lemma~\ref{lem:codim}, we obtain
\begin{equation}\label{eq:transversality}
\partial_{p_{\alpha}} L_{p_{\alpha}}\big|_{p=p_{\alpha_k}} \Phi_k \notin \operatorname{Range} L_{p_{\alpha_k}}.
\end{equation}
Thus the Crandall--Rabinowitz theorem \cite[Theorem 1.7]{CR} applies, completing the proof.
\end{proof}

\begin{proposition}\label{prop:non-vert}
Under the assumptions of Proposition~\ref{prop:localbif} and with $k$ even, we have $p'(0) < 0$. Consequently,
\[
\alpha(s) = m(p(s)-1) - 2 = \alpha_k + m p'(0) s + \mathrm O(s^2),
\]
with $p'(0) < 0$. In particular, the bifurcation branch in Proposition~\ref{prop:localbif} is not vertical with respect to the parameter $\alpha$.
\end{proposition}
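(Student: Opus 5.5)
The plan is to use the standard Crandall--Rabinowitz formula for the first derivative of the parameter along the branch. Expand $\mathcal F(p(s),\eta(s))=0$ in $s$ to second order, using $\eta(s)=s\Phi_k+s\psi(s)$ and $\psi(0)=0$. Since $\mathcal F(p,0)\equiv0$, all pure $p$-derivatives of $\mathcal F$ at $\eta=0$ vanish. At order $s$ we recover $L_{p_{\alpha_k}}\Phi_k=0$. At order $s^2$ we obtain
\[
L_{p_{\alpha_k}}\psi'(0)+p'(0)\,\partial_{p}L_{p}\big|_{p_{\alpha_k}}\Phi_k+\tfrac12 D^2_\eta\mathcal F(p_{\alpha_k},0)[\Phi_k,\Phi_k]=0 .
\]
We then apply the functional $\ell$ of Lemma~\ref{lem:codim}, which annihilates $\operatorname{Range}L_{p_{\alpha_k}}$. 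This kills the first term and gives
\[
p'(0)\,D_k=-\tfrac12\,\ell\bigl(D^2_\eta\mathcal F(p_{\alpha_k},0)[\Phi_k,\Phi_k]\bigr).
\]
Because $\mathcal F$ is $C^3$ (Lemma~\ref{lem:nemytskii-local}) and the curve is $C^2$, this expansion is legitimate. A cleaner route is to divide $\mathcal F(p(s),\eta(s))=0$ by $s$, differentiate once at $s=0$, and then apply $\ell$.

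Next we compute the second derivative. By Lemma~\ref{lem:nemytskii-local},
\[
D^2_\eta\mathcal F(p_{\alpha_k},0)[h,h]=-p_{\alpha_k}(p_{\alpha_k}-1)W_{p_{\alpha_k}}^{p_{\alpha_k}-2}h^2.
\]
Here $W>0$ everywhere, so the positive-part truncation plays no role. Therefore
\[
p'(0)\,D_k=\tfrac12\,p_{\alpha_k}(p_{\alpha_k}-1)\int_{\mathbb R}W_{p_{\alpha_k}}^{p_{\alpha_k}-2}\varphi_k^3\,dt\int_{\mathbb S^{N-1}}Y_k^3\,d\theta .
\]
The $t$-integral is strictly positive because its integrand is positive. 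It also converges: $W^{p-2}\varphi_k^3$ decays like $e^{-(m(p-2)+3(m+k))|t|}$, and $m(p-2)=2k-m>0$ since $k>m$. The angular integral is strictly positive by Lemma~\ref{lem:cubicmoment}, which is the only place where the evenness of $k$ enters. So the right-hand side is positive.

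Finally, $D_k<0$ by \eqref{eq:Dknegative-short}, which gives $p'(0)<0$. Since $p_\alpha=(N+2+2\alpha)/(N-2)$, we have $\alpha=m(p-1)-2$. The curve $\alpha(s)=m(p(s)-1)-2$ is $C^2$ with $\alpha(0)=\alpha_k$ and $\alpha'(0)=mp'(0)<0$, and Taylor's theorem yields the stated expansion and non-verticality.

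The main obstacle is justifying the order-$s^2$ identity, namely differentiating $\mathcal F$ along the branch within the regularity available. The terms $\partial_p\mathcal F(p,0)=0$ and $\partial_p^2\mathcal F(p,0)=0$ vanish because $\mathcal F(p,0)\equiv0$. The quotient $\mathcal F(p(s),\eta(s))/s$ should be written as an integral of $D_\eta\mathcal F$ along the segment from $0$ to $\eta(s)$, which is $C^1$ in $s$. This, together with the decay bounds already used in Lemma~\ref{lem:codim}, makes $\ell$ applicable to each term.
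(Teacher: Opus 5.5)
Your proposal is correct and follows the paper's proof essentially step for step: you expand $\mathcal F(p(s),\eta(s))=0$ to order $s^2$ and pair with $\Phi_k$. Your functional $\ell$ plays the role of the paper's self-adjointness of $L_{p_{\alpha_k}}$, and the pairing yields $p'(0)D_k=\tfrac12 p_{\alpha_k}(p_{\alpha_k}-1)\int_{\mathcal C}W_{p_{\alpha_k}}^{p_{\alpha_k}-2}\Phi_k^3\,dt\,d\theta$; you then conclude by separation of variables, Lemma~\ref{lem:cubicmoment}, and $D_k<0$. Your divide-by-$s$ justification via $\mathcal F(p,\eta)=\int_0^1 D_\eta\mathcal F(p,\tau\eta)[\eta]\,d\tau$ is, if anything, a slightly cleaner rigorous version of the paper's coefficient-freezing Taylor argument.
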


\begin{proof}
Since $\mathcal F$ is of class $C^3$, the Crandall--Rabinowitz bifurcation curve is $C^2$. We may therefore set $\nu := p'(0)$ and $\Psi := \frac{1}{2}\eta''(0)$, and write the Taylor expansions
\begin{equation}
    \label{petae}
    \begin{aligned}
        p(s) &= p_{\alpha_k} + \nu s + \mathrm O(s^2), \\
\eta(s) &= s\Phi_k + s^2\Psi + o(s^2)
    \end{aligned}
\end{equation}
in $\mathbb R \times X$. 

By \eqref{lopt} and \eqref{eq:F-general-def}, the Taylor expansion of \(g_p\) at \(W_p\) gives
\[
\mathcal F(p,\eta)
=
L_p\eta
-
\left[
(W_p+\eta)^p-W_p^p-pW_p^{p-1}\eta
\right].
\]
By \eqref{petae}, we have
\[
L_{p(s)}\eta(s)
=
sL_{p_{\alpha_k}}\Phi_k
+
s^2\left(
L_{p_{\alpha_k}}\Psi
+
\nu\,\partial_{p_\alpha}L_{p_\alpha}\big|_{p_\alpha=p_{\alpha_k}}\Phi_k
\right)
+
o(s^2).
\]
Here the first-order term vanishes since \(L_{p_{\alpha_k}}\Phi_k=0\).
On the other hand, Taylor expansion of the nonlinear remainder gives
\[
(W_{p(s)}+\eta(s))^{p(s)}
-W_{p(s)}^{p(s)}
-p(s)W_{p(s)}^{p(s)-1}\eta(s)
=
\frac12 p_{\alpha_k}(p_{\alpha_k}-1)W_{p_{\alpha_k}}^{p_{\alpha_k}-2}s^2\Phi_k^2
+
o(s^2).
\]
Indeed, in the quadratic remainder we may freeze the coefficients at
\(p=p_{\alpha_k}\).  Since
\[
p(s)-p_{\alpha_k}=O(s),\qquad W_{p(s)}-W_{p_{\alpha_k}}=O(s),
\qquad \eta(s)^2=s^2\Phi_k^2+o(s^2),
\]
the changes of \(p(s)\) and \(W_{p(s)}\) in the coefficient of \(\eta(s)^2\)
produce only \(O(s^3)\) terms.  Hence
\[
\frac12 p(s)(p(s)-1)W_{p(s)}^{p(s)-2}\eta(s)^2
=
\frac12 p_{\alpha_k}(p_{\alpha_k}-1)W_{p_{\alpha_k}}^{p_{\alpha_k}-2}s^2\Phi_k^2
+o(s^2).
\]
 Therefore, expanding the equation
\(\mathcal F(p(s),\eta(s))=0\) up to order \(s^2\) yields
\[
L_{p_{\alpha_k}}\Psi
+
\nu\,\partial_{p_\alpha}L_{p_\alpha}\big|_{p_\alpha=p_{\alpha_k}}\Phi_k
-
\frac12 p_{\alpha_k}(p_{\alpha_k}-1)W_{p_{\alpha_k}}^{p_{\alpha_k}-2}\Phi_k^2
=
0.
\]

Taking the $L^2(\mathcal C)$ inner product with $\Phi_k$, and using the self-adjointness of $L_{p_{\alpha_k}}$ together with $L_{p_{\alpha_k}}\Phi_k = 0$, we obtain
\begin{equation}\label{eq:slopeeq}
\nu D_k
=
\frac{1}{2} p_{\alpha_k}(p_{\alpha_k}-1)
\int_{\mathcal C} W_{p_{\alpha_k}}^{p_{\alpha_k}-2}\Phi_k^3 \, dt \, d\theta,
\end{equation}
where $D_k < 0$ is defined in \eqref{eq:Dknegative-short}. The integral on the right-hand side separates as
\begin{equation}\label{eq:separated}
\int_{\mathcal C} W_{p_{\alpha_k}}^{p_{\alpha_k}-2}\Phi_k^3 \, dt \, d\theta
=
\left(
\int_{\mathbb R} W_{p_{\alpha_k}}^{p_{\alpha_k}-2}\varphi_k^3 \, dt
\right)
\left(
\int_{\mathbb S^{N-1}} Y_k^3 \, d\theta
\right).
\end{equation}
The radial integral is strictly positive. Furthermore, by Lemma~\ref{lem:cubicmoment}, the angular factor is also positive. Combining this with \eqref{eq:slopeeq}, \eqref{eq:separated}, and the fact that $D_k < 0$, we conclude that
\begin{equation}\label{eq:nunonzero}
\nu = p'(0) < 0.
\end{equation}
This completes the proof.
\end{proof}

\begin{proof}[Proof of Theorem~\ref{thm:main}]
By Propositions~\ref{prop:localbif} and~\ref{prop:non-vert}, for every \(\alpha_k=2(k-1)\) with \(k>(N-2)/2\) an even integer, there exists a \(C^2\) local bifurcation curve \(s\longmapsto (p(s),\eta(s))\)
with 
\[ p(0)=p_{\alpha_k},\qquad \eta(0)=0,\qquad \eta(s)\not\equiv 0 \quad\text{for }s\ne0. \] 
Moreover, Proposition~\ref{prop:non-vert} gives \(p'(0)<0\). Hence, after reducing the neighborhood if necessary, \(s\mapsto p(s)\) is locally invertible. Since \(\alpha=m(p_{\alpha}-1)-2\), the same local branch can be parametrized by \(\alpha\). Therefore, for \(0<|\alpha-\alpha_k|<\varepsilon\), \(\varepsilon\) sufficiently small, there exists \(\eta_\alpha\in X\) with non-vanishing angular derivative such that \[ \mathcal F(p_\alpha,\eta_\alpha)=0. \] At \(\alpha=\alpha_k\), the branch passes through the radial solution \(U_{\alpha_k}\).

Define $w_{\alpha} := W_{p_{\alpha}} + \eta_\alpha$. This function satisfies
\[
\mathcal P w_{\alpha} = (w_{\alpha})_+^{p_{\alpha}} \ge 0 \quad \text{on } \mathcal C,
\]
and decays as
\[
|w_{\alpha}(t)| = \mathrm O(e^{-\sigma |t|}) \quad \text{as } |t| \to \infty.
\]
By the strong maximum principle, $w_\alpha > 0$ on $\mathcal C$. Hence $w_\alpha$ is a nontrivial solution of \eqref{eq:cylinder-general}. Substituting this back into the equation and applying the Fourier analysis on the
cylinder (see Section 7 of \cite{P}), we obtain the sharper decay estimate
\begin{equation}
    \label{wdecay}
    w_{\alpha}(t) = \mathrm O(e^{-m |t|}) \quad \text{as } |t| \to \infty.
\end{equation}
Finally, applying the transformation introduced in \eqref{clvar}--\eqref{clfun}, we set
\[
u_\alpha(x) := |x|^{-\frac{N-2}{2}} w_\alpha(t,\theta).
\]
Moreover, the estimate \eqref{wdecay} implies that
\[
u_\alpha(x)=|x|^{-m}w_\alpha \left(\log |x|, \frac{x}{|x|} \right) 
\]
is bounded near \(x=0\). Since \(\alpha>0\), the right-hand side
\(|x|^\alpha u_\alpha^{p_\alpha}\) is locally bounded near the origin. Thus the isolated point \(x=0\) is removable.  By standard elliptic regularity, \(u_\alpha\) has at least the regularity
\(u_\alpha\in C^{2,\beta}_{\mathrm{loc}}(\mathbb R^N)\cap
C^\infty(\mathbb R^N\setminus\{0\})\) for some
\(\beta\in(0,\min\{\alpha,1\})\).  Thus \(u_\alpha\) is a positive classical solution on all of \(\mathbb R^N\). The same estimate, now as \(|x|\to\infty\), gives \(u_\alpha(x)=O(|x|^{2-N})\). Since \(\eta_\alpha\) has non-vanishing angular derivative for \(0<|\alpha-\alpha_k|<\varepsilon\), the solution \(u_\alpha\) is non-radial.
\end{proof}

\section{Another proof of the bifurcation theorem of Gladiali--Grossi--Neves}
\label{sec:global}

In this section we employ a $C^1$ fixed-point framework, so the restriction \eqref{eq:gamma-choice} from Section~\ref{sec:local} is no longer needed. 
Let \(\alpha_k=2(k-1)\) with \(k>1\).  The symmetry group \(G\) is chosen as
follows.  If \(k\) is odd, we take \(G=O(N-1)\).  If \(k\) is even, then for
each \(h=1,\ldots,\left\lfloor N/2\right\rfloor\), we take
\(G_h=O(h)\times O(N-h)\), where \(\lfloor N/2\rfloor\)
denotes the greatest integer less than or equal to \(N/2\).  Equivalently,
\begin{equation}\label{eq:G}
G =
\begin{cases}
O(N-1), & \text{if } k \text{ is odd}, \\[2mm]
G_h, & \text{if } k \text{ is even}.
\end{cases}
\end{equation}
For $j = 0, 1, 2$, define the symmetry spaces
\[
\mathcal E_G^j := \left\{
v \in C^{j,\gamma}_\sigma(\mathcal C) :
v(t,\theta) = v(-t,\theta),\;
v(t,g\theta) = v(t,\theta) \text{ for all } g \in G
\right\},
\]
where $0 < \sigma < m$ and $\gamma \in (0,1)$. If $G = O(N-1)$, the action $g\theta$ is understood as in the definition of the space $X$ in the previous section.
By the Smoller--Wasserman invariant-harmonic calculation, as in \cite[Section 3.1 and Theorem 3.8]{GGN}, the space of degree-$k$ $G$-invariant spherical harmonics on $\mathbb S^{N-1}$ is one-dimensional; we denote it by
\[
\operatorname{span}\{\hat Y_k(\theta)\}.
\]

Since the bounded linear operators $\mathcal P$ and $L_{p_\alpha}$ commute with the reflection $t \mapsto -t$ and with the $G$-action on the sphere, the spaces $\mathcal E_G^2$ and $\mathcal E_G^0$ are invariant under them. In this section, we consider
\[
\mathcal P,\; L_{p_\alpha} : \mathcal E_G^2 \to \mathcal E_G^0.
\]
Arguing as in the previous section, one obtains that both $\mathcal P$ and $L_{p_\alpha}$ are Fredholm operators of index zero. Furthermore:

\begin{lemma}\label{lem:simple-crossing-global}
With the above assumptions, we have:
\begin{itemize}
\item[(i)] The kernel of $L_{p_{\alpha_k}} \colon \mathcal E_G^2 \to \mathcal E_G^0$ is one-dimensional and is given by
\[
\ker L_{p_{\alpha_k}} = \operatorname{span}\{\hat \Phi_k\},
\]
where $\hat \Phi_k(t,\theta) = \varphi_k(t)\hat Y_k(\theta)$ and $\varphi_k$ is as in \eqref{phik}.

\item[(ii)]
\[
L_{p_{\alpha_k}}(\mathcal E_G^2)
=
\left\{
f \in \mathcal E_G^0 :
\int_{\mathcal C} f \, \hat \Phi_k \, dt \, d\theta = 0
\right\}.
\]

\item[(iii)] The transversality condition holds:
\begin{equation}\label{eq:global-crossing-D}
\left\langle
\partial_p L_p \big|_{p = p_{\alpha_k}} \hat \Phi_k,\,
\hat \Phi_k
\right\rangle_{L^2(\mathcal C)}
=
- A_k \int_{\mathbb S^{N-1}} \hat Y_k^2(\theta) \, d\theta
< 0,
\end{equation}
where
\[
A_k
=
\frac{m(2m+3k)}{2}
\int_{\mathbb R} \sech^{\frac{2(m+2k)}{k}}(kt) \, dt
> 0.
\]
\end{itemize}
\end{lemma}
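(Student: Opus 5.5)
The plan is to repeat the arguments of Lemmas~\ref{KerLk}, \ref{lem:codim} and the transversality computation in Proposition~\ref{prop:localbif}, with the symmetry group $O(N-1)\times O(1)$ replaced by $G$. Throughout, we use that $L_{p_{\alpha_k}}\colon\mathcal E_G^2\to\mathcal E_G^0$ is Fredholm of index zero, as stated just before the lemma.

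For (i), let $v\in\mathcal E_G^2$ with $L_{p_{\alpha_k}}v=0$. Since $\sigma>0$, $v$ decays exponentially, so $v\in H^1(\mathcal C)$ and Theorem~\ref{thm:kernel} applies. Hence $v=cW_{p_{\alpha_k}}'+\varphi_k(t)Y(\theta)$ with $Y\in\mathcal Y_k$. Evenness of $v$ in $t$, together with the oddness of $W'$ and the evenness of $\varphi_k$, forces $c=0$. Averaging over the compact group $G$ (Haar measure) commutes with the Fourier decomposition and preserves $\mathcal Y_k$, so $Y$ must be a $G$-invariant degree-$k$ harmonic. By the Smoller--Wasserman fact quoted above, $Y$ is a multiple of $\hat Y_k$. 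Conversely, $\hat\Phi_k$ is even in $t$, $G$-invariant, and decays like $e^{-(m+k)|t|}$, so it lies in $\mathcal E_G^2$ because $\sigma<m$. Hence the kernel is exactly $\operatorname{span}\{\hat\Phi_k\}$.

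For (ii), follow the proof of Lemma~\ref{lem:codim}. The functional $\ell(f)=\int_{\mathcal C} f\hat\Phi_k$ is bounded and nontrivial on $\mathcal E_G^0$, since $\ell(\hat\Phi_k)>0$; the integral converges because $\sigma+m+k>0$. Integrating by parts on $(-R,R)\times\mathbb S^{N-1}$ and using the decay rates $e^{-\sigma|t|}$ and $e^{-(m+k)|t|}$, the boundary terms vanish as $R\to\infty$, so $\operatorname{Range}\subset\ker\ell$. Index zero and (i) give codimension one for the range, which yields equality.

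For (iii), the computation is separable. We have $\partial_pL_p=-\partial_p(pW_p^{p-1})$, which depends only on $t$. Hence the pairing factors as $-\bigl(\int_{\mathbb R}\partial_p(pW_p^{p-1})|_{p_{\alpha_k}}\varphi_k^2\,dt\bigr)\int_{\mathbb S^{N-1}}\hat Y_k^2$. The radial factor was computed in the proof of Proposition~\ref{prop:localbif}: it equals $\frac{m(2m+3k)}{2}\int\sech^{Q}(kt)\,dt$ with $Q=2(m+2k)/k$, which is precisely $A_k>0$. That computation did not use the parity of $k$, so the formula holds for all $k>1$.

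The main obstacle is the step in (i) that reduces to $G$-invariant harmonics. One must check that the projection of $v$ onto the $l=k$ mode inherits $G$-invariance and that no other $l$ contributes. Both follow since $G$ acts by isometries commuting with $\Delta_{\mathbb S^{N-1}}$, so the spectral projections are $G$-equivariant.
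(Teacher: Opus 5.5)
Your proposal is correct and matches the paper's proof, which simply repeats Lemmas~\ref{KerLk} and~\ref{lem:codim} with $G$ in place of $O(N-1)\times O(1)$ for (i)--(ii) and reuses the separable radial computation behind \eqref{eq:Dknegative-short} for (iii); as you note, that computation does not depend on the parity of $k$. One small simplification: the Haar averaging in (i) is unnecessary, since $v=\varphi_k(t)Y(\theta)$ with $\varphi_k>0$ already forces $Y$ to be $G$-invariant.
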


\begin{proof}
The proofs of (i) and (ii) are the same as those of Lemma~\ref{KerLk} and Lemma~\ref{lem:codim}, respectively. The proof of (iii) follows as in the proof of \eqref{eq:Dknegative-short}. 
\end{proof}

By \eqref{eq:P-isom}, the restricted operator
\begin{equation}\label{eq:P-isom-g}
\mathcal P \colon \mathcal E_G^2 \to \mathcal E_G^0
\end{equation}
is surjective and invertible. For $p = p_\alpha \in (p_0, \infty)$ with $\alpha > 0$ and $\eta \in \mathcal E_G^0$, define the nonlinear operator
\begin{equation}\label{eq:Kglobal}
\mathcal K(p, \eta)
:=
\mathcal P^{-1}
\left(
(W_{p_\alpha} + \eta)_+^{p_\alpha} - W_{p_\alpha}^{p_\alpha}
\right).
\end{equation}

In view of \eqref{eq:P-isom}, the fact that $p_0 = \frac{N+2}{N-2} > 1$, and the rapid decay of $W_{p_\alpha}$ and its derivatives as $|t| \to \infty$, the proof of the following lemma is straightforward.

\begin{lemma}\label{lem:compactK}
For any closed interval $[a,b] \subset (p_0, \infty)$, the operator
\[
\mathcal K \colon [a,b] \times \mathcal E_G^0 \to \mathcal E_G^0
\]
is compact and of class $C^1$. Moreover, its Fréchet derivative at $\eta = 0$ is given by
\[
D_\eta \mathcal K(p,0)\xi
=
\mathcal P^{-1}
\bigl( p W_p^{p-1} \xi \bigr)
\qquad \text{for all } \xi \in \mathcal E_G^0.
\]
\end{lemma}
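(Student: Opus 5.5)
We would prove Lemma~\ref{lem:compactK} by factoring $\mathcal K$ as
\[
\mathcal K=\mathcal P^{-1}\circ \iota\circ \mathcal N,\qquad
\mathcal N(p,\eta):=(W_p+\eta)_+^{p}-W_p^{p},
\]
and treating the three factors in turn. Here $\mathcal N$ is viewed as a map from $[a,b]\times\mathcal E_G^0$ into a weighted H\"older space with a \emph{stronger} weight and smaller H\"older exponent, $\iota$ is the embedding back into $\mathcal E_G^0$-type spaces, and $\mathcal P^{-1}$ is the bounded inverse from \eqref{eq:P-isom-g}. Compactness will come from a gain in decay (via $p>p_0>1$) combined with a gain in regularity (via $\mathcal P^{-1}$), through an Arzel\`a--Ascoli argument on the cylinder.

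\textbf{Step 1: $\mathcal N$ is $C^1$ with a weight gain.} Since $[a,b]\subset(p_0,\infty)$ and $p_0>1$, we have $s\mapsto (s_+)^p$ of class $C^1$ with locally H\"older continuous derivative $p(s_+)^{p-1}$, of exponent $\min\{1,p-1\}\ge \gamma':=\min\{\gamma,a-1\}$. Write
\[
\mathcal N(p,\eta)=\int_0^1 p\,(W_p+\tau\eta)_+^{p-1}\,d\tau\;\eta .
\]
On the window $(\tau_0-1,\tau_0+1)\times\mathbb S^{N-1}$ the functions $W_p$ and $\eta$ are $O(e^{-\sigma|\tau_0|})$ in $C^{0,\gamma}$, so the coefficient is $O(e^{-(p-1)\sigma|\tau_0|})$ in $C^{0,\gamma'}$. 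Hence
\[
\mathcal N(p,\eta)\in C^{0,\gamma'}_{\sigma'},\qquad \sigma'=\min\{p\sigma,\;\sigma+(p-1)\sigma\}=p\sigma>\sigma,
\]
with bounds uniform on bounded sets. The same mean-value representation gives $C^1$ dependence on $\eta$, with derivative $D_\eta\mathcal N(p,\eta)\xi=p(W_p+\eta)_+^{p-1}\xi$. Continuity in $p$ and joint $C^1$ follow from $\partial_p W_p=O(|t|e^{-m|t|})$ (absorbed by $\sigma<m$), together with $\partial_p (s_+)^p=(s_+)^p\log s_+$ being continuous on bounded sets. In this step we may lower $\gamma'$ slightly, which costs nothing. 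Evaluating at $\eta=0$ yields $D_\eta\mathcal K(p,0)\xi=\mathcal P^{-1}(pW_p^{p-1}\xi)$. The $G$- and $t$-symmetries are preserved because $W_p$ is radial and even.

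\textbf{Step 2: regularity of $\mathcal P^{-1}$.} The Schauder estimate \eqref{eq:P-isom} and Pacard's theory hold equally for the exponent $\gamma'$ and for weight $\sigma$ (and $\sigma'$ if $\sigma'\notin\mathcal I$; otherwise we simply use $\sigma$, as the decay gain is only needed to dominate tails). Thus $\mathcal P^{-1}$ maps $C^{0,\gamma'}_{\sigma'}\subset C^{0,\gamma'}_\sigma$ boundedly into $C^{2,\gamma'}_\sigma$, and this estimate is what feeds the compactness argument.

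\textbf{Step 3: compactness, the main obstacle.} We show that the embedding of $C^{2,\gamma'}_\sigma\cap\{\text{image}\}$ into $C^{0,\gamma}_\sigma$ is compact on the relevant bounded sets. The key difficulty is that the cylinder is noncompact and the weight in the target is the same $\sigma$, so decay must be extracted from the $\sigma'$ gain. Take a bounded sequence $(p_n,\eta_n)$ and set $f_n=\mathcal N(p_n,\eta_n)$, so that $\sup_n\|f_n\|_{C^{0,\gamma'}_{p_n\sigma}}<\infty$ with $p_n\sigma\ge a\sigma>\sigma$. Let $v_n=\mathcal P^{-1}f_n$. On compact pieces $[-R,R]\times\mathbb S^{N-1}$, Arzel\`a--Ascoli (using $C^{2,\gamma'}\Subset C^{0,\gamma}$) gives a subsequence converging locally. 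For the tails, the plan is to compare with $\mathcal P^{-1}$ at an intermediate weight $\sigma''\in(\sigma,\min\{a\sigma,m\})\setminus\mathcal I$, which is possible since $\sigma<m$ and $\mathcal I\cap(0,m)=\emptyset$. This gives $\|v_n\|_{C^{2,\gamma'}_{\sigma''}}\le C$, hence
\[
\sup_{|\tau|>R}e^{\sigma|\tau|}\|v_n\|_{C^{0,\gamma}}\le Ce^{-(\sigma''-\sigma)R}\to0
\]
uniformly in $n$. A diagonal argument then yields convergence in $C^{0,\gamma}_\sigma$, and the limit keeps the symmetries because $\mathcal E_G^0$ is closed. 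Consequently $\mathcal K$ maps bounded sets of $[a,b]\times\mathcal E_G^0$ to relatively compact sets, and together with the $C^1$ property established above this completes the proof.
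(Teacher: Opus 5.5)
Your proposal is correct. It is the argument the paper leaves as ``straightforward'': the decay gain $p\sigma>\sigma$ from $p>p_0>1$, the weighted isomorphism of $\mathcal P$ at an intermediate weight $\sigma''\in(\sigma,\min\{a\sigma,m\})$ (non-indicial since $\mathcal I\cap(0,m)=\emptyset$), and Arzel\`a--Ascoli on $[-R,R]\times\mathbb S^{N-1}$ with uniform tail control, with the H\"older regularity lost in the $C^1$ step recovered by $\mathcal P^{-1}$.

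Two small fixes are needed. First, composing the $\min\{1,p-1\}$-H\"older map $s\mapsto p(s_+)^{p-1}$ with a $C^{0,\gamma}$ function gives exponent $\gamma\min\{1,a-1\}$, not $\min\{\gamma,a-1\}$, so $\gamma'$ must be taken strictly below that product. Second, $\mathcal P$ is not an isomorphism on the $p\sigma$-weighted space when $p\sigma>m$, so your Step 2 parenthetical about using $\sigma'=p\sigma$ does not hold as stated. This is harmless because your Step 3 works at $\sigma''<m$.
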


Consider the fixed-point equation
\begin{equation} \label{eq:dual-main}
\eta - \mathcal K(p,\eta) = 0
\qquad \text{on } (p_0, \infty) \times \mathcal E_G^0.
\end{equation}
Let \(\eta\not\equiv0\) be a solution of \eqref{eq:dual-main}, and set
\(w:=W_p+\eta\).  If \(w\not\equiv0\), then
\(\mathcal P w=(w_+)^p\ge0\).  By the strong maximum principle, \(w>0\) on
\(\mathcal C\), and hence \(w\) is a positive solution of
\eqref{eq:cylinder-general}.  We say that a function \(f\) on \(\mathcal C\) is
non-radial if \(\nabla_{\mathbb S^{N-1}}f\not\equiv0\).  Since all functions in
\(\mathcal E_G^2\) are even in \(t\), the classification of positive
homoclinic solutions of \eqref{eq:cylinder-general} implies that the only
positive radial fixed point is the trivial one \(w=W_p\), or equivalently
\(\eta=0\).  Besides this trivial branch, the fixed-point equation also contains
the artificial zero branch \(\eta=-W_p\), corresponding to \(w\equiv0\).
Consequently, after excluding these two radial branches, finding non-radial
positive solutions of \eqref{eq:cylinder-general} in \(\mathcal E_G^2\) reduces
to finding non-radial solutions of \eqref{eq:dual-main}.

\begin{lemma}\label{lem:positive-global}
For any compact interval \(J\subset(p_0,\infty)\), there exists $\rho_J > 0$ such that for any $p \in J$, there is no solution $\eta$ of \eqref{eq:dual-main} satisfying
\[
0 < \|W_p + \eta\|_{C^{0,\gamma}_\sigma(\mathcal C)} < \rho_J.
\]
\end{lemma}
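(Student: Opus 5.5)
Set \(w:=W_p+\eta\). The plan is to argue by contradiction: a solution with \(0<\|w\|_{C^{0,\gamma}_\sigma}\) small would force \(w\equiv 0\). By \eqref{eq:Kglobal}, the fixed-point equation \eqref{eq:dual-main} reads \(\mathcal P\eta=(w_+)^p-W_p^p\). Since \(\mathcal P W_p=W_p^p\), adding gives
\[
\mathcal P w=(w_+)^p \quad\text{in } \mathcal C,
\]
with \(w\in\mathcal E^0_G\) (in fact \(w\in\mathcal E^2_G\) by \eqref{eq:P-isom-g}, since the right-hand side lies in \(\mathcal E^0_G\)). The key estimate is a bound of the nonlinearity in the weighted norm. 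Choose \(\delta>0\) small so that
\[
\frac{m}{p_0}<\sigma,\qquad p\sigma\ge \sigma \quad\text{for all } p\in J.
\]
For \(|w|\le \|w\|_{C^{0,\gamma}_\sigma}e^{-\sigma|t|}\) with \(\|w\|\le 1\), we want
\[
\|(w_+)^p\|_{C^{0,\gamma}_\sigma}\le C_J\,\|w\|_{C^{0,\gamma}_\sigma}^{\min\{p,1+\gamma\}}\quad\text{or at least}\quad \le C_J\|w\|^{1+\kappa}
\]
for some \(\kappa>0\) uniform in \(p\in J\).

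This nonlinear estimate is the step I expect to be the main obstacle, because of the Hölder seminorm and the weight. For the sup part, \((w_+)^p\le \|w\|^p e^{-p\sigma|t|}\le \|w\|^p e^{-\sigma|t|}\), since \(p>1\). For the \(C^{0,\gamma}\) seminorm on \((\tau-1,\tau+1)\times\mathbb S^{N-1}\), I would use that \(s\mapsto s_+^p\) is \(C^1\) with derivative \(p s_+^{p-1}\). This gives
\[
[(w_+)^p]_{\gamma}\le p\,\|w\|_{L^\infty}^{p-1}[w]_\gamma \lesssim \|w\|^{p}e^{-p\sigma|\tau|},
\]
and \(p\ge a>p_0>1\) uniformly on \(J\). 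Hence the estimate holds with exponent \(p\ge a\), uniformly for \(p\in J\).

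Combining this with the weighted Schauder/isomorphism estimate \eqref{eq:P-isom} (restricted as in \eqref{eq:P-isom-g}), we get
\[
\|w\|_{C^{0,\gamma}_\sigma}\le \|w\|_{C^{2,\gamma}_\sigma}\le C\|\mathcal P w\|_{C^{0,\gamma}_\sigma}\le C C_J\|w\|_{C^{0,\gamma}_\sigma}^{a}
\]
whenever \(\|w\|_{C^{0,\gamma}_\sigma}\le1\). Set \(\rho_J:=\min\{1,(CC_J)^{-1/(a-1)}\}\). If \(0<\|w\|<\rho_J\), dividing the inequality by \(\|w\|\) yields \(1\le CC_J\|w\|^{a-1}<1\), a contradiction. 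This proves the lemma.

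Finally, I will check that \(\rho_J\) does not depend on \(p\) except through \(J\). The constant in \eqref{eq:P-isom} depends only on \(N,\sigma,\gamma\), and \(C_J\) is controlled by \(\sup_J p\) and \(a=\min J\).
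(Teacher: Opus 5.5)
Your proposal is correct and matches the paper's proof. You set $w=W_p+\eta$, note that $\mathcal P w=(w_+)^p$, and combine the weighted estimate \eqref{eq:P-isom} with $\|(w_+)^p\|_{C^{0,\gamma}_\sigma}\le C_J\|w\|_{C^{0,\gamma}_\sigma}^{p}$, which rules out small positive $\|w\|_{C^{0,\gamma}_\sigma}$. Beyond the paper, you spell out the H\"older estimate for $(w_+)^p$ and the uniformity in $p\in J$ (restricting to $\|w\|\le1$ and using $a=\min J>1$), which the paper leaves implicit; the line choosing $\delta$ and requiring $m/p_0<\sigma$ plays no role and can be deleted.
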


\begin{proof}
If $\eta$ is a solution of \eqref{eq:dual-main}, then $w = W_p + \eta$ satisfies
\[
\mathcal P w = w_+^p \quad \text{on } \mathcal C.
\]
By \eqref{eq:P-isom},
\[
\|w\|_{C^{0,\gamma}_\sigma(\mathcal C)}
\le
C \|w_+^p\|_{C^{0,\gamma}_\sigma(\mathcal C)}
\le
C \|w\|_{C^{0,\gamma}_\sigma(\mathcal C)}^p,
\]
where $C > 0$ depends only on $N$, $\sigma$, $\gamma$, and $J$. Since $p \ge p_0 > 1$, this nonlinear inequality cannot hold when $\|w\|_{C^{0,\gamma}_\sigma(\mathcal C)}$ is positive but sufficiently small. The lemma is proved.
\end{proof}

We now reformulate the bifurcation theorem of Gladiali--Grossi--Neves \cite{GGN} as follows.

\begin{theorem}[Cylindrical version of Gladiali--Grossi--Neves]\label{thm:global-cylinder}
Let $k \ge 2$ and set $\alpha_k = 2(k-1)$. Then the following global continua of non-radial solutions of \eqref{eq:dual-main} bifurcate from $(p_{\alpha_k}, 0)$:
\begin{enumerate}[label=\textup{(\roman*)}]
\item If $k$ is odd, there exists at least one continuum of $O(N-1)$-invariant non-radial solutions.
\item If $k$ is even, there exist at least $\lfloor N/2 \rfloor$ continua of non-radial solutions. More precisely, for each
\[
h = 1, \dots, \left\lfloor \frac N2 \right\rfloor,
\qquad
G_h = O(h) \times O(N-h),
\]
there is a continuum of $G_h$-invariant non-radial solutions.
\end{enumerate}
\end{theorem}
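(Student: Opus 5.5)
Our plan is to apply the Rabinowitz global bifurcation theorem, in its Leray--Schauder degree form, to the compact fixed-point problem \eqref{eq:dual-main} on $(p_0,\infty)\times\mathcal E_G^0$. Lemma~\ref{lem:compactK} makes $I-\mathcal K(p,\cdot)$ a compact $C^1$ perturbation of the identity. So it suffices to show that the Leray--Schauder index of the trivial solution $\eta=0$ changes as $p$ crosses $p_{\alpha_k}$. Rabinowitz's alternative then yields a continuum $\mathcal S\subset\overline{\{(p,\eta):\eta\neq0,\ \eta=\mathcal K(p,\eta)\}}$ containing $(p_{\alpha_k},0)$. This continuum is either unbounded in $(p_0,\infty)\times\mathcal E_G^0$, or reaches the boundary $p=p_0$, or meets another trivial bifurcation point $(p_{\alpha_j},0)$.

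\textbf{Index jump.} The index of $0$ is $(-1)^{\beta(p)}$, where $\beta(p)$ is the sum of algebraic multiplicities of eigenvalues $\lambda>1$ of $D_\eta\mathcal K(p,0)=\mathcal P^{-1}(pW_p^{p-1}\,\cdot)$. Equivalently, $\beta(p)$ counts the $\mu\in(0,1)$ for which $\mathcal P\xi=\mu^{-1}pW_p^{p-1}\xi$ has a solution in $\mathcal E_G^2$. First, $0$ is isolated: for $p\ne p_{\alpha_k}$ close to it, Theorem~\ref{thm:kernel} together with evenness in $t$ shows $\ker L_p=\{0\}$ on $\mathcal E_G^2$. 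Thus $1$ is not an eigenvalue, and the index is defined on each side.

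At $p=p_{\alpha_k}$, Lemma~\ref{lem:simple-crossing-global}(i) shows that the eigenvalue $1$ has geometric multiplicity one. Part (ii) says $\hat\Phi_k$ is not in the range of $L_{p_{\alpha_k}}$ because $\int W^{p-1}\hat\Phi_k^2>0$, so the multiplicity is also algebraic one. By standard perturbation theory, a simple eigenvalue $\lambda(p)$ of $D_\eta\mathcal K(p,0)$ passes through $1$ near $p_{\alpha_k}$. We compute its derivative from $L_p\xi(p)=(1-\lambda(p)^{-1})\,pW_p^{p-1}\xi(p)$ by pairing with $\hat\Phi_k$ and differentiating at $p_{\alpha_k}$:
\[
\lambda'(p_{\alpha_k})\int_{\mathcal C}p_{\alpha_k}W_{p_{\alpha_k}}^{p_{\alpha_k}-1}\hat\Phi_k^2
=\langle\partial_pL_p\hat\Phi_k,\hat\Phi_k\rangle .
\]
By \eqref{eq:global-crossing-D} the right side is nonzero, so the crossing is strict and $\beta$ changes by exactly one. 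The remaining eigenvalues stay away from $1$ by continuity of the spectrum of compact operators.

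\textbf{Non-radiality of the continuum.} Next we show that $\mathcal S$ carries non-radial positive solutions. Near $(p_{\alpha_k},0)$, the solutions are tangent to $\hat\Phi_k$, which is non-radial because $\hat Y_k$ is nonconstant. Suppose $\mathcal S$ contained radial points other than $(p,0)$. By the homoclinic classification in the even class, the only such points are $\eta=0$ and $\eta=-W_p$. Lemma~\ref{lem:positive-global} keeps the continuum uniformly away from the artificial branch $\eta=-W_p$ over compact $p$-intervals. So $\mathcal S$ can only return to radial solutions through other trivial bifurcation points $(p_{\alpha_j},0)$. This occurrence is one of the alternatives of the conclusion. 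Since $w>0$ by the maximum principle, every point of $\mathcal S$ off the trivial line is a non-radial positive solution in $\mathcal E_G^2$, which gives (i) for $G=O(N-1)$ and, for each $h$, (ii) with $G_h$. The continua for different $h$ are distinct because their tangent directions $\hat Y_k$ lie in different invariant classes.

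\textbf{Main obstacle.} The delicate step is justifying the Leray--Schauder setting and the simplicity of the crossing. This requires, first, that the spectrum of $D_\eta\mathcal K(p,0)$ on $\mathcal E_G^0$ near $1$ consists only of the perturbed simple eigenvalue. Second, it requires that eigenfunctions of the generalized problem $\mathcal P\xi=\mu^{-1}pW_p^{p-1}\xi$ for $\mu$ near $1$ come only from the degree-$k$ mode. For this I would repeat the Pöschl--Teller argument of Lemma~\ref{lem:PT-spectrum} with the coefficient $\mu^{-1}p$, which is a small perturbation and keeps the other modes nondegenerate.
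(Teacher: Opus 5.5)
Your proposal is correct and follows essentially the same route as the paper. You apply Rabinowitz's global theorem to the compact fixed-point problem \eqref{eq:dual-main} in $\mathcal E_G^0$, with Lemma~\ref{lem:simple-crossing-global} supplying a simple transversal crossing, Lemma~\ref{lem:positive-global} separating the continuum from the artificial branch $\eta=-W_p$, and the homoclinic classification giving non-radiality. The difference is only in what is made explicit: you spell out the resulting Leray--Schauder index jump, while the paper spells out the blow-up argument showing that a second trivial point must be some $(p_{\alpha_j},0)$.

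There is one harmless inversion. An eigenvalue $\lambda$ of $\mathcal P^{-1}(pW_p^{p-1}\,\cdot)$ satisfies $\mathcal P\xi=\lambda^{-1}pW_p^{p-1}\xi$, i.e.\ $L_p\xi=(\lambda^{-1}-1)\,pW_p^{p-1}\xi$, so $\beta(p)$ counts the $c\in(0,1)$ with $\mathcal P\xi=c\,pW_p^{p-1}\xi$. This flips the sign of your $\lambda'(p_{\alpha_k})$ but not its non-vanishing, which is all the crossing argument needs.
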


\begin{proof}
By Lemma~\ref{lem:compactK}, this is a compact perturbation of the identity. Along the trivial branch $\eta = 0$, its linearization is
\[
I - D_\eta \mathcal K(p,0) = \mathcal P^{-1} L_p,
\]
with $L_p = L_{p_\alpha}$ as in \eqref{lopt}.
By Lemma~\ref{lem:simple-crossing-global}, the kernel at $p = p_{\alpha_k}$ is simple and the transversality condition holds. Therefore Rabinowitz's global bifurcation theorem for compact perturbations of the identity \cite[Theorem 1.3]{Rabinowitz} yields a connected set $\mathcal D_{G,k}$ in the closure of the nontrivial fixed points emanating from $(p_{\alpha_k}, 0)$.

Let
\[
\mathcal S_G^+
=
\left\{
(p,\eta) :
\eta = \mathcal K(p,\eta),\;
W_p + \eta >0,\;
\eta \not\equiv 0
\right\}
\]
and let \(\mathcal C_{G,k}\) be the connected component of
\(\overline{\mathcal S_G^+}\) containing \((p_{\alpha_k},0)\).  By
Lemma~\ref{lem:positive-global}, on every compact interval
\(J\subset(p_0,\infty)\), there exists \(\rho_J>0\) such that any fixed point
with \(p\in J\) and \(W_p+\eta>0\) satisfies
\(\|W_p+\eta\|_{C^{0,\gamma}_\sigma}\ge \rho_J\).  Thus the positive fixed
points in \(\mathcal S_G^+\) are uniformly separated from the artificial zero
branch \(W_p+\eta\equiv0\) on \(J\).  Hence passing from
\(\mathcal D_{G,k}\) to \(\mathcal C_{G,k}\) does not create any new compact
interior alternative.

Finally, we examine the global alternative. Assume that \(\mathcal C_{G,k}\) is bounded and does not meet \(p=p_0\). Then there exist a compact interval $J \subset (p_0, \infty)$ and $R > 0$ such that
\[
\mathcal C_{G,k} \subset J \times \mathcal B_R(0),
\]
where $\mathcal B_R(0) \subset \mathcal E_G^0$ is the open ball centered at $0$ with radius $R$. Since $\mathcal K$ is compact on bounded sets, the closure of $\mathcal C_{G,k}$ is compact. Rabinowitz's alternative therefore forces the closure to meet the trivial branch at some second point $(p_{\alpha_*}, 0)$ with $p_{\alpha_*} \neq p_{\alpha_k}$.

We now show that $L_{p_{\alpha_*}}$ has nontrivial kernel in the same symmetry class. Choose $(p_n, \eta_n) \in \mathcal C_{G,k}$ with $(p_n, \eta_n) \to (p_{\alpha_*}, 0)$ and $\eta_n \neq 0$, and set
\[
\xi_n = \frac{\eta_n}{\|\eta_n\|_{C^{0,\gamma}_\sigma(\mathcal C)}}.
\]
Using the Fréchet expansion of $\mathcal K$ at $(p_{\alpha_*}, 0)$ from Lemma~\ref{lem:compactK}, we obtain
\[
\eta_n = D_\eta \mathcal K(p_{\alpha_*}, 0)\eta_n
+ o(\|\eta_n\|_{C^{0,\gamma}_\sigma(\mathcal C)}).
\]
Dividing by $\|\eta_n\|_{C^{0,\gamma}_\sigma(\mathcal C)}$ gives
\[
\xi_n
=
D_\eta \mathcal K(p_{\alpha_*}, 0)\xi_n + o(1)
=
\mathcal P^{-1}
\bigl(
p_{\alpha_*} W_{p_{\alpha_*}}^{p_{\alpha_*}-1} \xi_n
\bigr)
+ o(1)
\qquad \text{in } \mathcal E_G^0.
\]
Since $\mathcal P^{-1}(p_{\alpha_*} W_{p_{\alpha_*}}^{p_{\alpha_*}-1} \cdot)$ is compact, a subsequence converges to some $\xi \neq 0$, and passing to the limit yields
\[
\xi = \mathcal P^{-1}
\bigl(
p_{\alpha_*} W_{p_{\alpha_*}}^{p_{\alpha_*}-1} \xi
\bigr).
\]
Equivalently,
\[
L_{p_{\alpha_*}} \xi = 0.
\]
By Theorem~\ref{thm:kernel}, this can happen only when
\[
p_{\alpha_*} = 1 + \frac{2j}{m}
\]
for some integer $j \neq k$ such that the degree-$j$ harmonic space contains a nonzero $G$-invariant vector. In terms of $\alpha$, this means
\[
\alpha_* = m(p_{\alpha_*} - 1) - 2 = 2(j-1) = \alpha_j.
\]
Thus the closure of the component either is unbounded, meets $p = p_0$ (equivalently $\alpha = 0$), or meets another bifurcation point $(\alpha_j, 0)$ in the same symmetry class.
\end{proof}

\begin{remark}\label{gbilm}
Note that the last part of the proof verifies something more than the statement of the theorem: it shows that any bounded continuum must either meet $p=p_0$ or reconnect to another even bifurcation point $(\alpha_j,0)$ in the same symmetry class.
\end{remark}

\section*{Declaration}

\noindent\textbf{Data availability:} Data availability is not applicable to this article as no new data were created or analyzed in this study.

\medskip

\noindent\textbf{Conflict of interest:} The authors declare that they have no conflicts of interests.


\begin{thebibliography}{99}
\bibitem{AAR}
G. E. Andrews, R. Askey and R. Roy,
\emph{Special Functions},
Encyclopedia of Mathematics and its Applications, vol.~71,
Cambridge University Press, Cambridge, 1999.

\bibitem{BadialeSerra}
M. Badiale and E. Serra,
Multiplicity results for the supercritical H\'enon equation,
\emph{Adv. Nonlinear Stud.} \textbf{4} (2004), no.~4, 453--467.

\bibitem{BoscagginColasuonnoNorisWeth2023}
A. Boscaggin, F. Colasuonno, B. Noris and T. Weth,
A supercritical elliptic equation in the annulus,
\emph{Ann. Inst. H. Poincar\'e C Anal. Non Lin\'eaire} \textbf{40} (2023),
no.~1, 157--183.

\bibitem{CGS}
L. Caffarelli, B. Gidas and J. Spruck,
Asymptotic symmetry and local behavior of semilinear elliptic equations with
critical Sobolev growth,
\emph{Comm. Pure Appl. Math.} \textbf{42} (1989), 271--297.

\bibitem{Conway}
J. B. Conway,
\emph{A Course in Functional Analysis},
2nd edn., Graduate Texts in Mathematics, vol.~96,
Springer, New York, 1990.

\bibitem{CowanMoameni2024}
C. Cowan and A. Moameni,
On supercritical elliptic problems: existence, multiplicity of positive and
symmetry breaking solutions,
\emph{Math. Ann.} \textbf{389} (2024), no.~2, 1731--1794.

\bibitem{CR}
M. G. Crandall and P. H. Rabinowitz,
Bifurcation from simple eigenvalues,
\emph{J. Functional Analysis} \textbf{8} (1971), 321--340.

\bibitem{DaiDuanGuiLi2026}
W. Dai, L. Duan, C. Gui and Y. Li,
Non-radial solutions for the critical quasi-linear H\'enon equation involving
\(p\)-Laplacian in \(\mathbb R^N\),
\emph{Proc. London Math. Soc.} \textbf{132} (2026), no.~4, Paper No.~e70148.

\bibitem{FigueroaNeves}
P. Figueroa and S. L. N. Neves,
Nonradial solutions for the H\'enon equation close to the threshold,
\emph{Adv. Nonlinear Stud.} \textbf{19} (2019), no.~4, 757--770.

\bibitem{Gasper}
G. Gasper,
Linearization of the product of Jacobi polynomials. I,
\emph{Canad. J. Math.} \textbf{22} (1970), 171--175.

\bibitem{GGN}
F. Gladiali, M. Grossi and S. L. N. Neves,
Nonradial solutions for the H\'enon equation in \(\mathbb R^N\),
\emph{Adv. Math.} \textbf{249} (2013), 1--36.

\bibitem{HanXiongZhang2023} 
Z. C. Han, J. Xiong and L. Zhang, Asymptotic behavior of solutions to the Yamabe equation with an asymptotically flat metric,
\emph{J. Funct. Anal.} \textbf{285} (2023), no.~11, Paper No.~109982.

\bibitem{Kato}
T. Kato,
\emph{Perturbation Theory for Linear Operators},
2nd edn., Classics in Mathematics,
Springer, Berlin, 1995.

\bibitem{LM}
R. B. Lockhart and R. C. McOwen,
Elliptic differential operators on noncompact manifolds,
\emph{Ann. Scuola Norm. Sup. Pisa Cl. Sci. (4)} \textbf{12} (1985), 409--447.


\bibitem{Marques2008} F. C. Marques, Isolated singularities of solutions to the Yamabe equation, \emph{Calc. Var. Partial Differential Equations} \textbf{32} (2008), 349--371.

\bibitem{P}
F. Pacard,
\emph{Connected sum constructions in geometry and nonlinear analysis},
lecture notes, 2008.

\bibitem{Perko}
L. Perko,
\emph{Differential Equations and Dynamical Systems},
3rd edn., Texts in Applied Mathematics, vol.~7,
Springer, New York, 2001.

\bibitem{PT}
J. Prajapat and G. Tarantello,
On a class of elliptic problems in \(\mathbb R^2\): symmetry and uniqueness results,
\emph{Proc. Roy. Soc. Edinburgh Sect. A} \textbf{131} (2001), 967--985.

\bibitem{Rabinowitz}
P. H. Rabinowitz,
Some global results for nonlinear eigenvalue problems,
\emph{J. Functional Analysis} \textbf{7} (1971), 487--513.

\bibitem{SW71}
E. M. Stein and G. Weiss,
\emph{Introduction to Fourier Analysis on Euclidean Spaces},
Princeton University Press, Princeton, 1971.

\bibitem{Teschl}
G. Teschl,
\emph{Mathematical Methods in Quantum Mechanics},
Graduate Studies in Mathematics, vol.~157,
American Mathematical Society, Providence, RI, 2014.

\bibitem{XiongZhang2022} 
J. Xiong and L. Zhang, Isolated singularities of solutions to the Yamabe equation in dimension 6, 
\emph{Int. Math. Res. Not. IMRN} \textbf{2022} (2022), no.~12, 9571--9597.


\end{thebibliography}
\end{document}